\documentclass{amsart}

\usepackage{mathtools}
\usepackage{amsmath}
\usepackage{amsthm}
\usepackage{amssymb}
\usepackage{enumitem}
\usepackage{float}
\usepackage{iftex}
\ifptex
  \usepackage[dvipdfmx,hidelinks]{hyperref}
\else
  \usepackage[hidelinks]{hyperref}
\fi
\usepackage{cleveref}

\numberwithin{equation}{section}

\theoremstyle{plain}
\newtheorem{thm}{Theorem}[section]
\newtheorem{prop}[thm]{Proposition}
\newtheorem{lem}[thm]{Lemma}
\newtheorem*{unnumberedlem}{Lemma}
\newtheorem{cor}[thm]{Corollary}
\crefname{thm}{Theorem}{Theorems}
\Crefname{thm}{Theorem}{Theorems}
\crefname{prop}{Proposition}{Propositions}
\Crefname{prop}{Proposition}{Propositions}
\crefname{lem}{Lemma}{Lemmas}
\Crefname{lem}{Lemma}{Lemmas}
\crefname{cor}{Corollary}{Corollaries}
\Crefname{cor}{Corollary}{Corollaries}

\theoremstyle{definition}
\newtheorem{defn}[thm]{Definition}
\newtheorem*{unnumbereddefn}{Definition}
\crefname{defn}{Definition}{Definitions}
\Crefname{defn}{Definition}{Definitions}

\theoremstyle{remark}
\newtheorem{rem}[thm]{Remark}
\crefname{rem}{Remark}{Remarks}
\Crefname{rem}{Remark}{Remarks}

\newcommand{\R}{\mathbb{R}}
\newcommand{\B}{\mathbb{B}}
\newcommand{\Hyp}{\mathbb{H}}
\newcommand{\Sph}{\mathbb{S}}
\newcommand{\Pyr}{\mathcal{P}}
\newcommand{\Ple}{\mathcal{P}_{\le 1}}
\newcommand{\RGauss}[2]{\Gamma^{#1}_{#2^2}(\Hyp)}
\newcommand{\WGauss}[2]{\Hyp\exp_o(\Gamma^{#1}_{#2^2})}
\newcommand{\eps}{\varepsilon}
\DeclareMathOperator{\diam}{diam}
\DeclareMathOperator{\PartDiam}{PartDiam}
\DeclareMathOperator{\supp}{supp}
\DeclareMathOperator{\Sep}{Sep}
\DeclareMathOperator{\ObsDiam}{ObsDiam}
\DeclareMathOperator{\Ric}{Ric}
\DeclareMathOperator{\Hess}{Hess}

\DeclareMathOperator{\arsinh}{arsinh}
\DeclareMathOperator{\artanh}{artanh}

\title[Radial hyperbolic measures]{Radial Hyperbolic Measures: Shell Geometry, Pyramid Limits, and Gaussian Phase Transitions}
\author{Shigeaki Yokota}
\date{}
\keywords{metric measure space, pyramid, hyperbolic space, radial measure, Bakry--\'Emery Ricci curvature, hyperbolic Gaussian}
\subjclass[2020]{Primary 53C23; Secondary 60B12, 28A33}

\begin{document}

\begin{abstract}
	In high-dimensional hyperbolic space, concentration of a radial measure near a shell need not determine the pyramid limit: angular concentration and hyperbolic expansion alter separation. An effective radius is the scale on which positive-mass sets actually separate, which the raw radius need not give. With vanishing rescaling and radial fluctuations, radius convergence gives weak convergence to the pyramid of all metric measure spaces with the resulting diameter bound. At modal radial Gibbs shells, exponential decay of intrinsic shell curvature and dimension-normalized tangential Bakry--\'Emery Ricci curvature recovers the radius. The Gaussian intrinsic to hyperbolic volume and that obtained by wrapping a Euclidean Gaussian have different critical orders. They are L\'evy below those orders, infinitely dissipate above them, and at criticality converge to the corresponding diameter-bounded pyramids.
\end{abstract}

\maketitle

\section{Introduction}\label{sec:introduction}

Gromov initiated the geometry of mm-spaces in order to treat high-dimensional
measure concentration while retaining both the distance and the measure
\cite[Introduction]{shioya2016mmg}.  A triple $(X,d_X,\mu_X)$ is an mm-space,
or simply $X$, if $(X,d_X)$ is a complete separable metric space and $\mu_X$
is a Borel probability measure
\cite[Definition~2.8]{shioya2016mmg}.  We say that $X$ dominates an mm-space
$Y$, and write $Y\prec X$, if there is a $1$-Lipschitz map from $X$ to $Y$
that pushes $\mu_X$ forward to $\mu_Y$.  Recording the full domination order
of $X$ gives the pyramid $\Pyr(X)$ of all mm-spaces dominated by $X$
\cite[Definition~2.10 and Section~6.1]{shioya2016mmg}.

Gromov proposed the compactification of the space of mm-spaces through the map
$X\mapsto\Pyr(X)$ \cite{gromov2007met}.  Shioya metrized its topology and
formulated the space of all pyramids as a compact metric space
\cite[Definition~4.5 and Theorem~4.6]{shioya2022sugaku}.  Convergence in
$\Box$ distance is not identical to measured Gromov--Hausdorff convergence,
but the two are closely related \cite[Remark~4.34]{shioya2016mmg}.  For
$D\ge0$, let $\mathcal P_{\le D}$ denote the pyramid of all mm-spaces of
diameter at most $D$.  Weak convergence of pyramids means sequential
Painlev\'e--Kuratowski convergence in the $\Box$-metric space of mm-spaces.
In mm-space theory, this is also called weak Hausdorff convergence
\cite[Definition~6.4]{shioya2016mmg}.  This notion allows limits governed only
by a diameter constraint, beyond the class of smooth spaces.

Even when a radial distribution concentrates at a single radius, that raw radius does not determine the pyramid limit.  Angular concentration on the high-dimensional sphere carries a square-root dimensional scale, and hyperbolic distance amplifies the resulting loss logarithmically.  Thus the raw radial scale and the scale on which positive-mass sets separate need not agree.  The effective radius corrects this discrepancy.

Fix an origin $o$ in the curvature $-1$ hyperbolic space $(\Hyp^n,d_{\Hyp})$.  Let $\mu_n$ be a Borel probability measure radial about $o$, put $X_n\coloneqq(\supp\mu_n,d_{\Hyp},\mu_n)$, and let $\mathbf R_n$ be the radius of a point with distribution $\mu_n$.  For a positive sequence $(s_n)$, write $s_nX_n\coloneqq(\supp\mu_n,s_nd_{\Hyp},\mu_n)$.  The effective radius of the shell of radius $\rho>0$ at this scale is
\begin{equation}\label{eq:discrete-radius}
	a_{n,\rho}\coloneqq s_n\log\frac{\sinh\rho}{\sqrt n}.
\end{equation}
For $a\in\R$, put $a_+\coloneqq\max\{a,0\}$, and set $(-\infty)_+\coloneqq0$.

The simplest discrepancy occurs when $\mu_n$ is uniform on the geodesic sphere of radius $\rho_n=\log n$ and $s_n=(\log n)^{-1}$.  Its radial width is exactly zero, and the raw scaled radius is $s_n\rho_n=1$, whereas $a_{n,\rho_n}\to1/2$.  Nevertheless,
\[ \Pyr(s_nX_n)\longrightarrow\mathcal P_{\le1}, \]
while the distances between any fixed finite number of independent sample points converge to $2$.

\begin{thm}[Radial Dirac concentration and the effective radius]
\label{thm:main}
	Let $s_n>0$ with $s_n\to0$, and suppose that deterministic radii
	$\rho_n>0$ satisfy
	\begin{equation}\label{eq:general-dirac-assumption}
		s_n(\mathbf R_n-\rho_n)\overset{\mathrm P}{\longrightarrow}0.
	\end{equation}
	If $a_{n,\rho_n}\to a\in[-\infty,+\infty)$, then
	\begin{equation}\label{eq:general-pyramid-limit}
		\Pyr(s_nX_n)\longrightarrow\mathcal P_{\le2a_+}
	\end{equation}
	weakly.

	For each $n$, let $H_n\subset\Hyp^n$ be a totally geodesic hyperplane
	through $o$, and let $u_{H_n}$ be signed distance from $H_n$, positive
	on one chosen side.  Then
	\begin{equation}\label{eq:general-signed-witness}
		(s_nu_{H_n})_*\mu_n
		\overset{\mathrm d}{\longrightarrow}
		\frac12\delta_{-a_+}+\frac12\delta_{a_+}.
	\end{equation}
\end{thm}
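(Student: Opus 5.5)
The plan is to prove \eqref{eq:general-pyramid-limit} by establishing two inequalities in the pyramid order separately. The upper bound says every weak limit of $\Pyr(s_nX_n)$ is contained in $\mathcal P_{\le2a_+}$. The lower bound says $\mathcal P_{\le 2a_+}$ is contained in the lower limit. The signed-distance statement \eqref{eq:general-signed-witness} then comes from the same one-dimensional computation that drives the lower bound.

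\emph{Upper bound.} I will bound the observable/partial diameter, since a pyramid limit lies in $\mathcal P_{\le D}$ once $\PartDiam(s_nX_n;1-\kappa)\le D+o(1)$ for every $\kappa>0$. Concretely, I will show that for each fixed $\kappa>0$ there is a set of mass at least $1-\kappa$ with $s_n$-scaled diameter at most $2a_++o(1)$; the limit of a sequence whose partial diameters obey this bound is dominated by a space of diameter $\le 2a_+$, and the whole pyramid is then contained in $\mathcal P_{\le 2a_+}$.
\begin{enumerate}[label=(\roman*)]
\item Write $x=(\mathbf R,\theta)$ with $\theta$ uniform on $\Sph^{n-1}$ and independent of $\mathbf R$; this is where radiality is used.
\item Take $A=\{|\mathbf R-\rho_n|\le \eta/s_n\}$, which has mass tending to $1$ by \eqref{eq:general-dirac-assumption}.
\item Intersect $A$ with a set of directions of mass $\ge1-\kappa$ inside a spherical cap of angular radius $\pi/2+C_\kappa/\sqrt n$, using Gaussian concentration on the sphere.
\end{enumerate}
This step needs care, because such a cap does not have small diameter. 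The proposal is therefore to use the hyperbolic law of cosines,
\[
\cosh d=\cosh r\cosh r'-\sinh r\sinh r'\cos\phi,
\]
and to compare with the model: the bulk of pairs have $\cos\phi=O(1/\sqrt n)$, which gives $d\approx 2\rho+\log(\ldots)$. This is not sufficient, since the partial diameter is controlled by the \emph{worst} pair in the set. I would instead bound the observable diameter directly. For a $1$-Lipschitz $f$, show that $f_*\mu_n$ concentrates on an interval of length $2a_++o(1)$ after scaling. To do this, condition on the radius and use Lévy concentration on the sphere of radius $r$, whose intrinsic metric is $\sinh r$ times the round one. The observable diameter of that sphere in $\Hyp$-distance is at most $\min\{\sinh r\cdot C/\sqrt n,\;2\log(\sinh r\cdot C/\sqrt n)+O(1)\}$. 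The logarithmic part comes from replacing the round chord by the hyperbolic distance through a point near $o$. After multiplying by $s_n$, this gives $2a_{n,\rho}+o(1)$, or $o(1)$ when $a\le0$. Radial fluctuation adds only $o(1)$ by the triangle inequality. This observable-diameter bound on shells, including the $\log$ comparison, is the main obstacle. I would first try proving it via the $1$-Lipschitz projection of the shell onto a geodesic through $o$ combined with a two-point reduction.

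\emph{Lower bound and witness.} For $a_+>0$, compute $u_H$ on the shell explicitly: $\sinh u_H=\sinh r\cdot\theta_1$, where $\theta_1=\cos(\text{angle to the normal})$. Here $\sqrt n\,\theta_1$ is approximately standard Gaussian, so
\[
s_nu_H=s_n\arsinh\bigl(\sinh r\,\theta_1\bigr)=\pm\bigl(s_n\log(\sinh r/\sqrt n)+s_n\log|2\sqrt n\theta_1|\bigr),
\]
and the second term is $o(1)$ in probability. This yields \eqref{eq:general-signed-witness}; when $a\le 0$, $|s_nu_H|\le s_n(\log\sinh r\,|\theta_1|+1)\to0$. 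For the pyramid lower bound, it suffices to show that every finite mm-space of diameter $\le 2a$ is approximately dominated by $s_nX_n$, since finite spaces are dense. Use $k$ nearly orthogonal hyperplanes $H^1,\dots,H^k$ through $o$: the vector $(s_nu_{H^j})_j$ converges to uniform sign vectors scaled by $a$, that is, to the uniform measure on $\{\pm a\}^k$ with the $\ell_\infty$ metric, and these vectors are asymptotically independent. The map is $1$-Lipschitz into $\ell_\infty^k$. Every finite space of diameter $\le 2a$ is a $1$-Lipschitz image of $\{\pm a\}^{k}$ with product-type measures for suitable $k$; I would take this from the standard characterization of $\mathcal P_{\le D}$ as the limit of pyramids of $\ell_\infty$ cubes, otherwise prove it directly by embedding. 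Compactness of the pyramid space then upgrades the two bounds to weak convergence.
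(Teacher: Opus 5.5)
\textbf{Gap: the upper bound is not proved.} Your lower bound and signed-distance computation work. The upper bound is what explains why the scale is $2\log(\sinh\rho_n/\sqrt n)$ rather than $2\rho_n$, and you leave it as ``the main obstacle''. The mechanisms you suggest for it point the wrong way:
\begin{itemize}
\item A path between shell points routed through a neighbourhood of $o$ has length about $2\rho_n$. That gives the raw scale $2s_n\rho_n$, not $2a_{n,\rho_n}=2s_n\log\sinh\rho_n-s_n\log n$.
\item Projecting onto one geodesic through $o$ controls a single Lipschitz function, not all of them.
\item Your linear bound $\sinh\rho_n\,C/\sqrt n$ from intrinsic L\'evy concentration is fine, but it covers neither $a>0$ nor $a=0$ with $A_{n,\rho_n}=\sinh\rho_n/\sqrt n\to+\infty$.
\end{itemize}

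\textbf{The missing idea: a monotone transform of the chordal metric.} Your own law of cosines with $r=r'=\rho$ gives $\cosh d=1+\frac12\sinh^2\rho\,|\theta-\eta|^2$. Equivalently, $d_{n,\rho}(\theta,\eta)=F_A(\sqrt n\,|\theta-\eta|)$ with $F_A(t)=2\arsinh(At/2)$ increasing and $A=\sinh\rho/\sqrt n$. Hence distances between sets on the shell are $F_A$ of the $\sqrt n$-chordal ones.

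Sphere concentration gives $C_\kappa$, independent of $n$, such that any two Borel sets of $\sigma_{n-1}$-measure at least $\kappa/2$ are at $\sqrt n$-chordal distance at most $C_\kappa$. On the shell they are therefore at hyperbolic distance at most $F_A(C_\kappa)$. For a $1$-Lipschitz $f$, apply this to the lower and upper $\kappa/2$-quantile sets of $f$. This gives $\PartDiam(f_*\sigma_{n-1};1-\kappa)\le F_A(C_\kappa)$, and
\[ s_nF_{A_{n,\rho_n}}(C_\kappa)\le 2s_n\log(1+C_\kappa)+2s_n(\log A_{n,\rho_n})_+\longrightarrow 2a_+ . \]

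\textbf{Also needed: why this bound confines the limit.} You must say why an observable-diameter bound forces every subsequential limit into $\mathcal P_{\le 2a_+}$. Suppose a limit element $Y$ has two positive-mass compact sets at distance greater than $2a_++4\eta$. Pull them back through box approximants and the domination maps to sets $B_0,B_1\subset s_nX_n$ at distance greater than $2a_++\eta$. Then $d(\cdot,B_0)$ violates the bound for small $\kappa$. The paper does exactly this in separation language, using the upper separation bound for $\mathsf S_n$ and $\Sep(\Sigma;\cdot)=F_A(\Sep(\mathsf S_n;\cdot))$.

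\textbf{Your lower bound is a valid, different route.} The coordinate signs of $\Theta_n$ are exactly i.i.d.\ symmetric and independent of $(|\Theta_{n,j}|)_j$, and each $s_n|u_{H^j}|\to a$ in probability. So $(s_nu_{H^j})_{j\le k}$ converges to the uniform measure on $\{\pm a\}^k$. In $\ell_\infty^k$ this set is equilateral with side $2a$. Hence every map from it into a space of diameter at most $2a$ is $1$-Lipschitz, and you get all finite spaces with dyadic masses, which are dense in $\mathcal P_{\le 2a}$. No cube characterization is needed. The paper instead proves $\Sep\to1$ for the normalized shell via the Maxwell--Boltzmann law and cites Shioya's Proposition~8.5(1); your construction is more explicit and avoids that citation.

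\textbf{Minor fix.} Your $a\le0$ witness bound $s_n(\log(\sinh r\,|\theta_1|)+1)$ can be negative. Use $\arsinh y\le\log(1+2y)$ to get $|s_nu_H|\le s_n\log(1+2|\sqrt n\,\theta_1|)+s_n(\log A_{n,\rho_n})_+$.
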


The signed-distance limit is nondegenerate precisely when $a>0$, in which case it is the two-point distribution $\frac12\delta_{-a}+\frac12\delta_a$.  If $a\le0$, then $a_+=0$, so the signed-distance limit reduces to the point mass $\delta_0$, consistently with the one-point pyramid $\mathcal P_{\le0}$.

Here $\sinh\rho_n$ is the shell's tangential hyperbolic expansion, $\sqrt n$ is the loss from angular concentration, and the logarithm of their ratio is the tangential scale that survives the rescaling.  In the usual situation $s_n\rho_n\to r$ and $s_n\log n\to\lambda$, the condition $a_{n,\rho_n}\to a$ is checked by subtracting half the dimensional logarithmic scale from the raw radius.

For a positive rescaling sequence $(\alpha_n)$ and fixed $\sigma>0$, the two Gaussian families have the following phase classification.
\begin{table}[H]
	\centering
	\small
	\setlength{\tabcolsep}{3pt}
	\begin{tabular}{@{}p{0.20\textwidth}p{0.09\textwidth}p{0.13\textwidth}p{0.23\textwidth}p{0.21\textwidth}@{}}
		\hline
		Measure family & Critical scale $t_n$ & $\alpha_n/t_n\to0$ & $\alpha_n/t_n\to c\in(0,\infty)$ & $\alpha_n/t_n\to+\infty$ \\
		\hline
		Riemannian\newline Gaussian & $n^{-1}$ & L\'evy & $\mathcal P_{\le2c\sigma^2}$ & Infinite dissipation \\
		Wrapped\newline Gaussian & $n^{-1/2}$ & L\'evy & $\mathcal P_{\le2c\sigma}$ & Infinite dissipation \\
		\hline
	\end{tabular}
	\caption{Phase classification of the two hyperbolic Gaussian families.}
	\label{tab:intro-gaussian-phases}
\end{table}
The critical scales differ because hyperbolic volume weighting in the Riemannian model and exponential-map wrapping in the second model produce different radial concentration scales before the angular correction is applied.

The proof first couples the radial measure to a deterministic shell at its concentration radius while keeping the same angular variable.  On that shell, hyperbolic distance is a monotone transform of the $\sqrt n$-scaled spherical chord distance, and separation of positive-mass sets identifies the diameter-bounded pyramid.

The radial decomposition supplies the deterministic-shell reduction, and the curvature results recover the effective radius as the exponential decay rate of the shell's curvatures.  The Gaussian radial limit theorems provide the hypotheses for the critical pyramid limits and the phase-transition conclusion.

\section{Radiality in the Poincar\'e ball}\label{sec:radiality}

Let
\[ \B^n\coloneqq\{x\in\R^n\mid |x|<1\} \]
carry the Poincar\'e metric
\[ g_x\coloneqq\frac4{(1-|x|^2)^2}g_{\mathrm E}. \]
Its distance is
\begin{equation}\label{eq:poincare-distance}
	d_{\Hyp}(x,y)
	=
	2\arsinh\frac{|x-y|}
	{\sqrt{(1-|x|^2)(1-|y|^2)}}.
\end{equation}
In particular,
\begin{equation}\label{eq:origin-radius}
	r(x)\coloneqq d_{\Hyp}(0,x)
	=2\artanh|x|
	=\log\frac{1+|x|}{1-|x|}.
\end{equation}
Let $\sigma_{n-1}$ be normalized surface measure on $\Sph^{n-1}$.

\begin{defn}[Radial measure]\label{def:radial}
	A Borel probability measure $\mu$ on $\B^n$ is \emph{radial about the
	origin} if
\[ Q_*\mu=\mu \qquad(Q\in O(n)). \]
\end{defn}

\begin{prop}[Equivalent formulations of radiality]
\label{prop:equivalent-radiality}
	Let $(\Hyp^n,o)$ be a pointed curvature $-1$ hyperbolic space, let
	$\iota\colon(\Hyp^n,o)\to(\B^n,0)$ be a pointed isometry, and identify
	$T_o\Hyp^n$ isometrically with $\R^n$.  For a Borel probability measure
	$\mu$ on $\Hyp^n$, the following conditions are equivalent.
	\begin{enumerate}[label=\textup{(\roman*)}]
	\item The measure $\mu$ is invariant under every hyperbolic isometry
	fixing $o$.
	\item The measure $\iota_*\mu$ is radial in the sense of
	\Cref{def:radial}.
	\item The measure $(\exp_o^{-1})_*\mu$ is invariant under the ordinary
	$O(n)$-action on $T_o\Hyp^n\simeq\R^n$.
	\end{enumerate}
	The condition is independent of the choices of $\iota$ and of the
	orthonormal identification of the tangent space.
\end{prop}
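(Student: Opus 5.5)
The idea is to identify, under the fixed pointed isometries, the isotropy group of $o$ in $\Hyp^n$ with the linear group $O(n)$ acting on $\B^n$ and on $T_o\Hyp^n\simeq\R^n$, and then transport invariance along the maps $\iota$ and $\exp_o$. First I would show that the hyperbolic isometries of $\B^n$ fixing $0$ are exactly the restrictions of $Q\in O(n)$. One inclusion comes from \eqref{eq:poincare-distance}: $d_{\Hyp}(Qx,Qy)=d_{\Hyp}(x,y)$ because $|Qx-Qy|=|x-y|$ and $|Qx|=|x|$. For the converse, let $\phi$ be an isometry with $\phi(0)=0$. At $0$ the metric is $4g_{\mathrm E}$, so $d\phi_0$ is a Euclidean orthogonal map $Q$. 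An isometry of a connected complete Riemannian manifold is determined by its value and differential at one point. Hence $\phi$ and $Q|_{\B^n}$, which agree to first order at $0$, coincide. Consequently the isotropy group of $o$ in $\Hyp^n$ is $\{\iota^{-1}\circ Q\circ\iota : Q\in O(n)\}$.

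For (i)$\Leftrightarrow$(ii) I would use the identity $(\iota^{-1}Q\iota)_*\mu=\mu \iff Q_*(\iota_*\mu)=\iota_*\mu$, which holds because $\iota$ is a Borel bijection. Combined with the first step, this gives the equivalence.

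For (i)$\Leftrightarrow$(iii) I would use naturality of the exponential map: if $\phi$ is an isometry fixing $o$, then $\phi\circ\exp_o=\exp_o\circ d\phi_o$. Since $\Hyp^n$ is Cartan--Hadamard, $\exp_o$ is a diffeomorphism, so $(\exp_o^{-1})_*$ is a bijection on Borel probability measures and intertwines $\phi_*$ with $(d\phi_o)_*$. The map $\phi\mapsto d\phi_o$ is a bijection from the isotropy group onto $O(T_o\Hyp^n)$. Surjectivity can be checked in the ball model by $Q\mapsto Q$, using the first step, or taken from the standard fact that every linear isometry of $T_o$ is realized. Under the chosen orthonormal identification, $O(T_o\Hyp^n)$ becomes $O(n)$, which gives the equivalence.

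Independence of choices follows from (i), which involves neither $\iota$ nor a frame. Two pointed isometries differ by an isometry of $\B^n$ fixing $0$, that is, by an element of $O(n)$. Two orthonormal identifications differ by an element of $O(n)$, and $O(n)$ normalizes itself. The only step that is not a pure formality is the rigidity that an isometry is determined by its $1$-jet. I would quote this as standard, or verify it directly here: $\phi\circ Q^{-1}$ fixes $0$ with identity differential, so it fixes every geodesic through $0$ pointwise, and these geodesics are the Euclidean diameters, which cover $\B^n$.
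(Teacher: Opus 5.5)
Your proposal is correct and follows the same route as the paper: identify the stabilizer of $o$ with the linear $O(n)$-action in the ball model, transport invariance through $\iota$, and use $g\circ\exp_o=\exp_o\circ(dg)_o$ together with the fact that the differentials $(dg)_o$ exhaust $O(T_o\Hyp^n)$. You add details the paper only asserts, namely the $1$-jet rigidity argument showing the stabilizer is exactly $O(n)$ and the observation that independence of choices already follows because condition (i) is intrinsic, but the structure of the argument is the same.
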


\begin{proof}
	The stabilizer of the origin in the isometry group of the Poincar\'e ball
	is the ordinary action of $O(n)$.  This proves the equivalence of
	\textup{(i)} and \textup{(ii)}.  For every isometry $g$ fixing $o$,
	\[
		g\circ\exp_o=\exp_o\circ(dg)_o,
	\]
	and the differentials $(dg)_o$ exhaust the orthogonal group of
	$T_o\Hyp^n$.  This proves the equivalence with \textup{(iii)}.  Two
	pointed Poincar\'e identifications or two orthonormal tangent-space
	identifications differ by an orthogonal transformation, which proves the
	last assertion.  This completes the proof.
\end{proof}

The two coordinate descriptions satisfy, for $v\ne0$,
\begin{equation}\label{eq:exp-poincare} \iota(\exp_ov)=\tanh(|v|/2)\frac v{|v|}. \end{equation}

\begin{prop}[Exact radial--shell decomposition]
\label{prop:exact-decomposition}
	For a Borel probability measure $\mu$ on $\B^n$, the following
	conditions are equivalent.
	\begin{enumerate}[label=\textup{(\roman*)}]
	\item The measure $\mu$ is radial.
	\item There is a unique probability measure $\nu$ on $[0,+\infty)$
	such that
	\begin{equation}\label{eq:disintegration}
		\mu=\Psi_*(\nu\otimes\sigma_{n-1}),
		\qquad
		\Psi(r,\theta)\coloneqq\tanh(r/2)\theta.
	\end{equation}
	\item A point with distribution $\mu$ can be realized as
	\begin{equation}\label{eq:random-decomposition}
		\mathbf X=\tanh(R/2)\Theta,
		\qquad
		R\mathbin{\perp\!\!\!\perp}\Theta,
		\qquad
		\Theta\sim\sigma_{n-1}.
	\end{equation}
	\end{enumerate}
	The measure $\nu$ is given by
	$\nu=r_*\mu=(2\artanh|\cdot|)_*\mu$.  If $\omega_{n,r}$ denotes
	normalized measure on the geodesic sphere of radius $r$, then
	\[
		\mu=\int_{[0,+\infty)}\omega_{n,r}\,\nu(dr).
	\]
\end{prop}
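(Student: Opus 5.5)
I will prove the cycle (i)$\Rightarrow$(ii)$\Rightarrow$(iii)$\Rightarrow$(i) and then read off the formulas for $\nu$ and the shell integral.

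The map $\Psi\colon[0,+\infty)\times\Sph^{n-1}\to\B^n$ is continuous and surjective. It is a bijection off $\{0\}\times\Sph^{n-1}$, and it sends that set to the origin. Its measurable left inverse on $\B^n\setminus\{0\}$ is $x\mapsto(2\artanh|x|,x/|x|)$, and $r\circ\Psi(r,\theta)=r$ by \eqref{eq:origin-radius}.

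\textbf{(ii)$\Rightarrow$(iii) and (iii)$\Rightarrow$(i).} These are immediate. Take $R\sim\nu$ independent of $\Theta\sim\sigma_{n-1}$; then $\tanh(R/2)\Theta$ has law $\Psi_*(\nu\otimes\sigma_{n-1})=\mu$. Conversely, given the realization \eqref{eq:random-decomposition}, fix $Q\in O(n)$. Then $Q\mathbf X=\tanh(R/2)Q\Theta$. Since $Q_*\sigma_{n-1}=\sigma_{n-1}$ and $R$ is independent of $\Theta$, the pair $(R,Q\Theta)$ has the same joint law as $(R,\Theta)$. Hence $Q_*\mu=\mu$.

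\textbf{(i)$\Rightarrow$(ii).} This is the main step. Put $\nu\coloneqq r_*\mu$ and compare $\mu$ with $\mu'\coloneqq\Psi_*(\nu\otimes\sigma_{n-1})$. Both are $O(n)$-invariant, and both push forward under $r$ to $\nu$. Any atom at the origin has the same mass for each, namely $\nu(\{0\})$. On $\B^n\setminus\{0\}$, identify each measure via the polar map with a measure on $(0,\infty)\times\Sph^{n-1}$ that is invariant under $O(n)$ acting on the second factor.

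Disintegrate such a measure along $r$. Uniqueness of disintegration gives $\nu$-a.e.\ $O(n)$-invariant conditional measures on the sphere; to obtain an a.e.\ statement simultaneously for all of $O(n)$, use a countable dense subgroup together with weak continuity of the action. By uniqueness of the $O(n)$-invariant probability measure on $\Sph^{n-1}$, each conditional measure equals $\sigma_{n-1}$. Hence both measures equal $\nu\otimes\sigma_{n-1}$, and so $\mu=\mu'$.

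The cleanest alternative, which I would actually write, avoids the measure-theoretic bookkeeping by averaging. For a bounded Borel $f$ on $\B^n$, invariance gives
\[
\int f\,d\mu=\int_{O(n)}\int f(Qx)\,\mu(dx)\,dQ ,
\]
where $dQ$ is Haar measure. By Fubini, the inner average $\int_{O(n)}f(Q\,\tanh(r/2)\theta)\,dQ$ equals $\int f(\tanh(r/2)\eta)\,\sigma_{n-1}(d\eta)$, because the orbit pushforward of Haar measure is $\sigma_{n-1}$. This quantity depends only on $r=r(x)$. Integrating it against $\mu$ gives $\int f\,d\Psi_*(r_*\mu\otimes\sigma_{n-1})$.

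\textbf{Uniqueness of $\nu$ and the shell integral.} If $\mu=\Psi_*(\nu\otimes\sigma_{n-1})$, then $r_*\mu=(r\circ\Psi)_*(\nu\otimes\sigma_{n-1})=\nu$. So $\nu$ is forced to be $r_*\mu=(2\artanh|\cdot|)_*\mu$. Finally, $\Psi(r,\cdot)_*\sigma_{n-1}$ is the normalized rotation-invariant measure on the Poincar\'e sphere of hyperbolic radius $r$, which is $\omega_{n,r}$ (with $\omega_{n,0}=\delta_0$). Fubini then gives $\mu=\int\omega_{n,r}\,\nu(dr)$.

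The only delicate point is the identification of the conditional spherical law in (i)$\Rightarrow$(ii); the Haar-averaging argument sidesteps it.
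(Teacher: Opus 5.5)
Your proposal is correct and follows essentially the paper's argument: Haar averaging of a bounded Borel test function gives $\mu=\Psi_*(r_*\mu\otimes\sigma_{n-1})$, $O(n)$-invariance of the product form gives the converse, and pushing forward by $r$ (using $r\circ\Psi(r,\theta)=r$) gives uniqueness of $\nu$. The disintegration route you sketch is not needed, and your cycle through the random representation (iii) simply makes explicit what the paper calls equivalent forms of the same decomposition.
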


\begin{proof}
	Suppose that $\mu$ is radial, and put $\nu\coloneqq r_*\mu$.  Average a
	bounded Borel function over normalized Haar measure on $O(n)$.  The orbit
	measure on each nonzero Euclidean sphere is uniform, and the choice of
	angle at the origin does not change its image.  This gives
	\eqref{eq:disintegration}.  Conversely, the right-hand side of
	\eqref{eq:disintegration} is $O(n)$-invariant.  Pushing forward by $r$
	proves uniqueness of $\nu$.  The random-variable and shell-mixture
	formulations are equivalent forms of the same decomposition.
	This completes the proof.
\end{proof}

\section{Deterministic shells and their separation scale}
\label{sec:shell-reduction}

For $\rho>0$, define the uniform shell
\begin{equation}\label{eq:shell}
	\Sigma_{n,\rho}
	\coloneqq
	(\Sph^{n-1},d_{n,\rho},\sigma_{n-1}),
	\qquad
	d_{n,\rho}(\theta,\eta)
	\coloneqq
	d_{\Hyp}\bigl(\tanh(\rho/2)\theta,\tanh(\rho/2)\eta\bigr).
\end{equation}

\begin{unnumbereddefn}[Couplings]
Let $\mathcal T(\mu_X,\mu_Y)$ be the set of Borel probability measures on
$X\times Y$ whose marginals are $\mu_X$ and $\mu_Y$. An element
$\pi\in\mathcal T(\mu_X,\mu_Y)$ is called a \emph{coupling}, or a
\emph{transport plan}.
\end{unnumbereddefn}

\begin{unnumbereddefn}[Box distance {\cite[Proposition~4.4 and Theorem~1.1]{nakajima2022coupling}}]
For a closed set $S\subset X\times Y$, put
\[ \operatorname{dis}S\coloneqq \sup_{(x,y),(x',y')\in S} \left|d_X(x,x')-d_Y(y,y')\right| \]
and define the \emph{box distance} by
\[ \Box(X,Y)\coloneqq \inf_{\pi,S}\max\{1-\pi(S),\operatorname{dis}S\}, \]
where the infimum is taken over all $\pi\in\mathcal T(\mu_X,\mu_Y)$ and all
closed sets $S\subset X\times Y$. This is the coupling characterization of the
box distance. In particular, if
$\pi(S)\ge1-\delta$ and $\operatorname{dis}S\le\eta$, then
$\Box(X,Y)\le\max\{\delta,\eta\}$.
\end{unnumbereddefn}

\begin{prop}[Radial coupling]\label{prop:radial-coupling}
	For every $s_n>0$, $\rho_n>0$, and $\eps>0$,
	\begin{equation}\label{eq:box-bound}
		\Box(s_nX_n,s_n\Sigma_{n,\rho_n})
		\le
		\max\left\{2\eps,
		\mathbb P\bigl(s_n|\mathbf R_n-\rho_n|>\eps\bigr)\right\}.
	\end{equation}
	In particular,
	\begin{equation}\label{eq:scaled-shell-dirac}
		s_n(\mathbf R_n-\rho_n)\overset{\mathrm P}{\longrightarrow}0
	\end{equation}
	implies
	\[
		\Box(s_nX_n,s_n\Sigma_{n,\rho_n})\longrightarrow0.
	\]
\end{prop}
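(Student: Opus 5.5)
Via \Cref{prop:exact-decomposition}, realize a point of $X_n$ as $\mathbf X=\tanh(\mathbf R_n/2)\Theta$ with $\mathbf R_n\mathbin{\perp\!\!\!\perp}\Theta$ and $\Theta\sim\sigma_{n-1}$. We will couple $s_nX_n$ with $s_n\Sigma_{n,\rho_n}$ by sending $\mathbf X$ to the same angular variable $\Theta$. Concretely, let $\pi$ be the law of $(\mathbf X,\Theta)$ on $\supp\mu_n\times\Sph^{n-1}$. Its marginals are $\mu_n$ and $\sigma_{n-1}$, so $\pi\in\mathcal T(\mu_n,\sigma_{n-1})$.

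For the exceptional set we take the closed set
\[
	S\coloneqq\bigl\{(x,\theta)\in\supp\mu_n\times\Sph^{n-1} : x=\tanh(r(x)/2)\theta,\ s_n|r(x)-\rho_n|\le\eps\bigr\}.
\]
It is closed because the conditions are continuous equalities and non-strict inequalities. We interpret $x=0\cdot\theta$ as allowing any $\theta$ when $r(x)=0$; this is harmless. Then
\[
	\pi(S)\ge\mathbb P\bigl(s_n|\mathbf R_n-\rho_n|\le\eps\bigr),
\]
so $1-\pi(S)\le\mathbb P(s_n|\mathbf R_n-\rho_n|>\eps)$.

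The key step is the distortion bound $\operatorname{dis}S\le2\eps$ in the scaled metrics. Take $(x,\theta),(x',\theta')\in S$ and put $y=\tanh(\rho_n/2)\theta$ and $y'=\tanh(\rho_n/2)\theta'$. By definition $d_{n,\rho_n}(\theta,\theta')=d_{\Hyp}(y,y')$. The point $x$ lies on the same geodesic ray from $o$ as $y$, at radius $r(x)$, so $d_{\Hyp}(x,y)=|r(x)-\rho_n|$, and likewise for $x',y'$. The triangle inequality then gives
\[
	|d_{\Hyp}(x,x')-d_{\Hyp}(y,y')|\le|r(x)-\rho_n|+|r(x')-\rho_n|.
\]
Multiplying by $s_n$ bounds this by $2\eps$ on $S$. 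The coupling characterization of $\Box$ now yields \eqref{eq:box-bound}. This step needs no fine hyperbolic estimates.

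For the ``in particular'' claim, fix $\eps>0$. Under \eqref{eq:scaled-shell-dirac} the probability term tends to $0$, so $\limsup_n\Box(s_nX_n,s_n\Sigma_{n,\rho_n})\le2\eps$. Letting $\eps\downarrow0$ gives the conclusion.

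The only point needing care, and the main obstacle, is the measure-theoretic one. The box-distance characterization is stated for closed $S$ and for couplings on $X\times Y$ with $X=\supp\mu_n$. One must check that $\pi$ is supported there and that $S$ is closed, including the degenerate point $x=0$. If closedness at $x=0$ is awkward, we can replace $S$ by its closure: the distortion bound persists by continuity of the distances, and $\pi$ of the closure is at least $\pi(S)$.
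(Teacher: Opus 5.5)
Your proposal is correct and is essentially the paper's proof. It uses the same-direction coupling of $\tanh(\mathbf R_n/2)\Theta_n$ with $\tanh(\rho_n/2)\Theta_n$, the radial-geodesic identity $d_{\Hyp}=|r-\rho_n|$ with the triangle inequality to bound the scaled distortion by $2\eps$, and the coupling characterization of $\Box$; the only difference is cosmetic, since you write the closed set explicitly (your condition $x=\tanh(r(x)/2)\theta$ is just $x=|x|\theta$, which is closed and holds automatically at $x=0$), whereas the paper takes the closure of the good coupled pairs and invokes continuity of distances.
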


\begin{proof}
	Use \Cref{prop:exact-decomposition} to couple
	$\tanh(\mathbf R_n/2)\Theta_n$ and $\tanh(\rho_n/2)\Theta_n$ with the same
	direction.  Points on the same radial geodesic satisfy
	\[
		d_{\Hyp}\bigl(\tanh(r/2)\theta,\tanh(t/2)\theta\bigr)=|r-t|.
	\]
	Denote this coupling by $\pi_n$, and let $S_n$ be the closure of the set
	of coupled pairs satisfying $s_n|r-\rho_n|\le\eps$. The triangle
	inequality bounds the scaled distortion before taking the closure by
	$2\eps$, and continuity of the distances gives
	$\operatorname{dis}S_n\le2\eps$. Moreover,
\[ 1-\pi_n(S_n) \le\mathbb P\bigl(s_n|\mathbf R_n-\rho_n|>\eps\bigr). \]
	The coupling characterization of the box distance stated above proves
	\eqref{eq:box-bound}. This completes the proof.
\end{proof}

\begin{prop}[Exact shell distance and curvature]\label{prop:shell-geometry}
	For every $\rho>0$,
	\begin{equation}\label{eq:exact-shell-metric}
		d_{n,\rho}(\theta,\eta)
		=
		2\arsinh\left(\frac{\sinh\rho}{2}|\theta-\eta|\right).
	\end{equation}
	The induced Riemannian metric and intrinsic sectional curvature of the
	shell are
	\begin{equation}\label{eq:induced-shell-metric}
		g_{\Sigma_{n,\rho}}
		=
		\sinh^2\rho\,g_{\Sph^{n-1}},
		\qquad
		K_{n,\rho}^{\Sigma}
		=
		\frac1{\sinh^2\rho}.
	\end{equation}

	Put
	\[
		A_{n,\rho}\coloneqq\frac{\sinh\rho}{\sqrt n},
		\qquad
		\mathsf S_n
		\coloneqq
		(\Sph^{n-1},\sqrt n\,|\theta-\eta|,\sigma_{n-1}).
	\]
	For $A>0$, define
\[ F_A(t)\coloneqq2\arsinh\frac{At}{2} \qquad(t\ge0). \]
	For an mm-space $(X,d_X,\mu_X)$, or simply $X$, put
	\[
		F_A(X)\coloneqq(X,F_A\circ d_X,\mu_X).
	\]
	Then
	\begin{equation}\label{eq:shell-transform-curvature}
		\Sigma_{n,\rho}=F_{A_{n,\rho}}(\mathsf S_n),
		\qquad
		A_{n,\rho}^{-2}=nK_{n,\rho}^{\Sigma}.
	\end{equation}
\end{prop}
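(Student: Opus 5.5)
The plan is a direct computation in the Poincar\'e ball using \eqref{eq:poincare-distance}. Put $t\coloneqq\tanh(\rho/2)$ and take $x=t\theta$, $y=t\eta$ with $\theta,\eta\in\Sph^{n-1}$. Then $|x-y|=t|\theta-\eta|$ and $(1-|x|^2)(1-|y|^2)=(1-t^2)^2$. Substituting these into \eqref{eq:poincare-distance} gives
\[
	d_{n,\rho}(\theta,\eta)=2\arsinh\frac{t|\theta-\eta|}{1-t^2}.
\]
The half-angle identity $\tanh(\rho/2)/(1-\tanh^2(\rho/2))=\sinh(\rho/2)\cosh(\rho/2)=\tfrac12\sinh\rho$ then yields \eqref{eq:exact-shell-metric}.

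For the induced metric, I will restrict $g_x=4(1-|x|^2)^{-2}g_{\mathrm E}$ to the Euclidean sphere of radius $t$. Since that sphere is $t\cdot\Sph^{n-1}$, the Euclidean metric restricts to $t^2g_{\Sph^{n-1}}$. The conformal factor is constant there, equal to $4/(1-t^2)^2$. The total factor is therefore $4t^2/(1-t^2)^2=\sinh^2\rho$, which gives $g_{\Sigma_{n,\rho}}=\sinh^2\rho\,g_{\Sph^{n-1}}$.

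As a cross-check, the same factor follows from the polar form $dr^2+\sinh^2 r\,g_{\Sph^{n-1}}$ of the hyperbolic metric via \eqref{eq:exp-poincare}. It can also be read off from \eqref{eq:exact-shell-metric}: for small chords, $2\arsinh(\tfrac12\sinh\rho\,|\theta-\eta|)\approx\sinh\rho\,|\theta-\eta|$. Scaling the round metric by $c^2$ scales sectional curvature by $c^{-2}$, so $K^{\Sigma}_{n,\rho}=1/\sinh^2\rho$. For $n-1=1$ the shell is a circle and sectional curvature is vacuous; I will note this and read the formula as the curvature of the scaled round metric.

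For \eqref{eq:shell-transform-curvature}, I will compute $F_A\circ d_{\mathsf S_n}$ with $A=\sinh\rho/\sqrt n$:
\[
	F_A\bigl(\sqrt n|\theta-\eta|\bigr)=2\arsinh\Bigl(\frac{\sinh\rho}{2\sqrt n}\sqrt n|\theta-\eta|\Bigr).
\]
This equals $d_{n,\rho}(\theta,\eta)$ by \eqref{eq:exact-shell-metric}. Both spaces have underlying set $\Sph^{n-1}$ and measure $\sigma_{n-1}$, so they coincide as mm-spaces. I will also check that $F_A$ preserves completeness and the topology. This holds because $F_A$ is an increasing homeomorphism of $[0,\infty)$ and the sphere is compact. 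It does not matter whether $F_A\circ d$ is itself a metric, because it equals $d_{n,\rho}$.

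Finally, $A_{n,\rho}^{-2}=n/\sinh^2\rho=nK^{\Sigma}_{n,\rho}$. There is no real obstacle. The only points needing care are the half-angle identity and stating the $n=2$ caveat for the curvature.
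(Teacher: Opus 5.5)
Your proposal is correct and is essentially the paper's direct computation. The paper obtains \eqref{eq:exact-shell-metric} from the polar identity $\cosh d_{\Hyp}=\cosh(r-t)+\frac12\sinh r\sinh t\,|\theta-\eta|^2$ with $\cosh(2\arsinh u)=1+2u^2$, and reads the induced metric off $dr^2+\sinh^2r\,g_{\Sph^{n-1}}$; you instead use \eqref{eq:poincare-distance} with $\tanh(\rho/2)/(1-\tanh^2(\rho/2))=\frac12\sinh\rho$ and restrict the conformal factor, which is the same identity in different coordinates (your $n=2$ caveat on sectional curvature is a fair point the paper leaves implicit).
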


\begin{proof}
	The hyperbolic polar distance identity is
	\begin{equation}\label{eq:hyperbolic-polar-distance}
		\cosh d_{\Hyp}\bigl((r,\theta),(t,\eta)\bigr)
		=
		\cosh(r-t)+\frac12\sinh r\sinh t\,|\theta-\eta|^2.
	\end{equation}
	Set $r=t=\rho$ and use
	$\cosh(2\arsinh u)=1+2u^2$ to obtain
	\eqref{eq:exact-shell-metric}.  The polar expression for the hyperbolic
	metric,
	\[
		dr^2+\sinh^2r\,g_{\Sph^{n-1}},
	\]
	gives \eqref{eq:induced-shell-metric}.  Substituting $A_{n,\rho}$ in
	\eqref{eq:exact-shell-metric} proves
	\eqref{eq:shell-transform-curvature}.  This completes the proof.
\end{proof}

\begin{prop}[Fixed finite samples become regular simplices]
\label{prop:equilateral}
	Suppose that $s_n\to0$, $s_n\rho_n\to r>0$, and
	$s_n(\mathbf R_n-\rho_n)\to0$ in probability.  For fixed $k\ge2$, let
	$\mathbf X_{n,1},\ldots,\mathbf X_{n,k}$ be independent points with
	distribution $\mu_n$.  Then their scaled distance matrices converge in
	probability to the distance matrix of a regular simplex with edge length
	$2r$:
	\[
		\bigl(s_nd_{\Hyp}(\mathbf X_{n,i},\mathbf X_{n,j})\bigr)_{i,j=1}^k
		\overset{\mathrm P}{\longrightarrow}
		\bigl(2r\,\boldsymbol{1}_{\{i\ne j\}}\bigr)_{i,j=1}^k.
	\]
\end{prop}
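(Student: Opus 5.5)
Write $\mathbf X_{n,i}=\tanh(\mathbf R_{n,i}/2)\Theta_{n,i}$ as in \Cref{prop:exact-decomposition}, with all $2k$ variables $\mathbf R_{n,i}$, $\Theta_{n,i}$ independent. The plan is to replace the radii by $\rho_n$, compute the resulting shell distances exactly, and use angular concentration on $\Sph^{n-1}$.

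\textbf{Step 1: reduction to the shell.} Set $\mathbf Y_{n,i}\coloneqq\tanh(\rho_n/2)\Theta_{n,i}$. As in the proof of \Cref{prop:radial-coupling}, the triangle inequality gives
\[
	\bigl|s_nd_{\Hyp}(\mathbf X_{n,i},\mathbf X_{n,j})-s_nd_{\Hyp}(\mathbf Y_{n,i},\mathbf Y_{n,j})\bigr|
	\le s_n|\mathbf R_{n,i}-\rho_n|+s_n|\mathbf R_{n,j}-\rho_n|.
\]
By hypothesis the right-hand side tends to $0$ in probability. Since $k$ is fixed, it therefore suffices to show $s_nd_{n,\rho_n}(\Theta_{n,i},\Theta_{n,j})\to2r$ in probability for each pair $i\ne j$. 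The diagonal entries are trivially zero.

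\textbf{Step 2: exact shell distance and angular concentration.} By \eqref{eq:exact-shell-metric},
\[
	s_nd_{n,\rho_n}=2s_n\arsinh\Bigl(\tfrac{\sinh\rho_n}{2}|\Theta_{n,i}-\Theta_{n,j}|\Bigr).
\]
For independent uniform $\Theta,\Theta'$ on $\Sph^{n-1}$, we have $|\Theta-\Theta'|^2=2-2\langle\Theta,\Theta'\rangle$ and $\mathbb E\langle\Theta,\Theta'\rangle^2=1/n$. Chebyshev's inequality then gives $|\Theta_{n,i}-\Theta_{n,j}|\to\sqrt2$ in probability. In particular, with probability tending to one, $|\Theta_{n,i}-\Theta_{n,j}|\in[1,2]$. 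The angular variable is thus not close to zero, which is the key difference from the pyramid statement, where $\sqrt n$-scale sets matter.

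\textbf{Step 3: logarithmic asymptotics.} Since $s_n\to0$ and $s_n\rho_n\to r>0$, we get $\rho_n\to\infty$. On the event $u\coloneqq|\Theta_{n,i}-\Theta_{n,j}|\in[1,2]$, we have
\[
	\arsinh\bigl(\tfrac{\sinh\rho_n}{2}u\bigr)=\rho_n+O(1),
\]
using $\arsinh x=\log(2x)+O(x^{-2})$ and $\log\sinh\rho_n=\rho_n-\log2+o(1)$. The $O(1)$ term is uniform for $u\in[1,2]$. Multiplying by $2s_n$ yields $2s_n\rho_n+O(s_n)\to2r$. Combining this with Step 1, via a union bound over the $\binom k2$ pairs, gives convergence in probability of the whole matrix.

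\textbf{Main obstacle.} The only delicate point is the uniformity in Step 3, where the random chord length must be kept away from $0$. Step 2 supplies this with high probability, so I expect no real difficulty. Note that the dimension enters only through an additive $O(1)$ inside the logarithm, unlike the effective-radius correction $\log\sqrt n$, because typical pairs are not at the separation scale.
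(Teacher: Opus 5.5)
Your proposal is correct and follows essentially the same route as the paper. It reduces to the shell of radius $\rho_n$ via the same-direction coupling and the triangle inequality, gets $|\Theta_{n,i}-\Theta_{n,j}|\to\sqrt2$ from $\mathbb E\langle\Theta_{n,i},\Theta_{n,j}\rangle^2=1/n$ and Chebyshev, and then evaluates the exact shell distance. The only cosmetic difference is in that last step: the paper sandwiches $d_{n,\rho_n}(\Theta_{n,i},\Theta_{n,j})$ between $2\log(\sinh\rho_n\,|\Theta_{n,i}-\Theta_{n,j}|)$ and $2\rho_n$, whereas you use a uniform $\arsinh$ expansion on the event $|\Theta_{n,i}-\Theta_{n,j}|\in[1,2]$.
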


\begin{proof}
	Under the coupling in \Cref{prop:radial-coupling}, the scaled error
	produced by replacing each distance with its value on the shell of radius
	$\rho_n$ converges to zero in probability.  For independent uniform
	directions, independence and rotational invariance give
	\[
		\mathbb E\langle\Theta_{n,i},\Theta_{n,j}\rangle^2=\frac1n.
	\]
	Chebyshev's inequality gives
	$\langle\Theta_{n,i},\Theta_{n,j}\rangle\to0$ in probability.  Therefore,
	$|\Theta_{n,i}-\Theta_{n,j}|^2
	=2-2\langle\Theta_{n,i},\Theta_{n,j}\rangle$ implies
\[ |\Theta_{n,i}-\Theta_{n,j}| \overset{\mathrm P}{\longrightarrow}\sqrt2. \]
	The triangle inequality and \eqref{eq:exact-shell-metric} give
	\[
		2\log\bigl(\sinh\rho_n
		|\Theta_{n,i}-\Theta_{n,j}|\bigr)
		\le
		d_{n,\rho_n}(\Theta_{n,i},\Theta_{n,j})
		\le2\rho_n.
	\]
	After multiplication by $s_n$, both bounds converge in probability to
	$2r$.  Apply this argument simultaneously to the finitely many pairs.
	This completes the proof.
\end{proof}

To identify the shell pyramid, we need to track the scale on which
positive-mass subsets of a shell can be mutually separated.  For Borel sets
$B,C\subset Y$, put
\[ d_Y(B,C)\coloneqq\inf\{d_Y(x,y)\mid x\in B,\ y\in C\}. \]
For an mm-space $Y$ and positive numbers $\kappa_0,\ldots,\kappa_N$, define
its separation distance by
\[ \Sep(Y;\kappa_0,\ldots,\kappa_N) \coloneqq \sup_{B_0,\ldots,B_N}\min_{i\ne j}d_Y(B_i,B_j), \]
where the supremum ranges over Borel sets $B_i\subset Y$ satisfying
$\mu_Y(B_i)\ge\kappa_i$ for every $i$.

\begin{unnumberedlem}
	For every $N\ge1$ and every positive tuple
	$\kappa_0,\ldots,\kappa_N$ with $\sum_{i=0}^N\kappa_i<1$, there are
	constants $0<c\le C<+\infty$ such that
	\[
		c\le\Sep(\mathsf S_n;\kappa_0,\ldots,\kappa_N)\le C
	\]
	for all sufficiently large $n$.
\end{unnumberedlem}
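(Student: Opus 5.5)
We bound $\Sep(\mathsf S_n;\kappa_0,\ldots,\kappa_N)$ from above and from below separately. Both bounds come from the fact that the $\sqrt n$-scaled sphere $\mathsf S_n$ behaves like a standard Gaussian on $\R^n$, through its one-dimensional marginals and through Lévy's isoperimetric inequality.

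\emph{Upper bound.} We use normal concentration on the sphere. By Lévy's isoperimetric inequality, if $B\subset\Sph^{n-1}$ has $\sigma_{n-1}(B)\ge\kappa$, then its geodesic $t$-neighbourhood has measure at least $\sigma_{n-1}(\{\theta_1\le -\tau_{n,\kappa}+t\})$. Here $\tau_{n,\kappa}$ is defined by the cap condition $\sigma_{n-1}(\theta_1\le-\tau_{n,\kappa})=\kappa$. In the scaled metric of $\mathsf S_n$ this gives the following. If $\mu(B_0)\ge\kappa_0$ and $\mu(B_1)\ge\kappa_1$, then $d(B_0,B_1)\le C(\kappa_0,\kappa_1)$ uniformly in large $n$. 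The reason is that the $\sqrt n\,\theta_1$ marginal converges to $N(0,1)$. So the cap thresholds on the $\sqrt n$ scale converge to Gaussian quantiles, and enlarging $B_0$ by $\Phi^{-1}(1-\kappa_1)-\Phi^{-1}(\kappa_0)+1$ already captures mass exceeding $1-\kappa_1$. The enlarged set therefore meets $B_1$. The chord distance is bounded by the geodesic distance, so the same bound holds for chord distances. Since $\min_{i\ne j}d(B_i,B_j)\le d(B_0,B_1)$, we get $\Sep\le C$. This step only needs $N\ge1$ and is routine once the Gaussian limit of the cap measures is quoted.

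\emph{Lower bound.} We construct explicit separated sets. Let $\gamma$ denote the standard Gaussian measure on $\R$. Because $\sum\kappa_i<1$, there is $\delta>0$ and there are disjoint intervals $I_0<I_1<\cdots<I_N$ in $\R$, with consecutive gaps at least $\delta$, such that $\gamma(I_i)>\kappa_i$ for every $i$. To find them, partition $\R$ into consecutive intervals of $\gamma$-mass $\kappa_i+\eta$, where $\eta>0$ is small, and then shrink each interval slightly so that gaps of positive length appear. Put $B_i\coloneqq\{\theta:\sqrt n\,\theta_1\in I_i\}$. Since $(\sqrt n\,\theta_1)_*\sigma_{n-1}\to\gamma$ weakly and the boundaries of the $I_i$ are $\gamma$-null, we get $\sigma_{n-1}(B_i)\ge\kappa_i$ for all large $n$. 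Distances also control the coordinate: for $\theta\in B_i$ and $\eta\in B_j$ with $i\ne j$, we have $\sqrt n|\theta-\eta|\ge\sqrt n|\theta_1-\eta_1|\ge\delta$. Hence $\Sep\ge c\coloneqq\delta$.

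The main obstacle is the uniformity in $n$ of the upper bound. This requires the quantitative Gaussian approximation of the law of $\sqrt n\,\theta_1$, namely that its density $\propto(1-x^2/n)^{(n-3)/2}$ converges to the Gaussian density. This gives convergence of the relevant quantiles. I would state it explicitly via Scheffé's lemma and then apply Lévy's inequality in the standard form from Shioya's monograph. Everything else is elementary bookkeeping.
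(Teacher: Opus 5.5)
Your proposal is correct. The lower bound matches the paper's argument in substance: Gaussian intervals with positive gaps are pulled back by $\theta\mapsto\sqrt n\,\theta_1$. The Maxwell--Boltzmann law controls the masses, and $|\sqrt n\,\theta_1-\sqrt n\,\eta_1|\le\sqrt n\,|\theta-\eta|$ controls the distances.

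Your upper bound takes a genuinely different route. The paper never analyzes caps. It passes to the spherical geodesic metric, uses that the first nonzero eigenvalue of the $\sqrt n$-scaled unit sphere is $(n-1)/n$, and applies the spectral separation bound of Proposition~2.38 in Shioya's monograph to two of the sets. This gives the explicit bound $2/\sqrt{((n-1)/n)\min_i\kappa_i}$ for every $n\ge2$, with no limiting argument. Your L\'evy-isoperimetric route needs cap asymptotics and only gives a bound for large $n$, which is all the statement asks for. In exchange, it produces the essentially sharp constant $\Phi^{-1}(1-\kappa_1)-\Phi^{-1}(\kappa_0)$, the two-set separation distance of the limiting Gaussian.

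One formula needs correcting. The geodesic $t$-neighbourhood of the cap $\{\theta_1\le-\tau\}$ is $\{\theta_1\le\sin(t-\arcsin\tau)\}$. This is strictly smaller than $\{\theta_1\le-\tau+t\}$, already for $\tau=0$ since $\sin t<t$, so the inequality you state is false as written. It does not hurt the argument. With $t=T/\sqrt n$ and $\sqrt n\,\tau_{n,\kappa_0}\to-\Phi^{-1}(\kappa_0)$, one still gets $\sqrt n\,\sin(t-\arcsin\tau_{n,\kappa_0})\to T+\Phi^{-1}(\kappa_0)$, and your extra $+1$ absorbs the discrepancy.

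Finally, the ``main obstacle'' you flag is lighter than you suggest. No quantitative approximation or Scheff\'e argument is needed. Weak convergence of $\sqrt n\,\theta_1$ to $N(0,1)$, the same input as your lower bound, already gives convergence of these thresholds, because $\Phi$ is continuous and strictly increasing.
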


\begin{proof}
	Let $\gamma^1$ be the standard Gaussian probability measure on $\R$.
	Since $\sum_i\kappa_i<1$, and since $\gamma^1$ is nonatomic and positive
	on every open interval, there are compact intervals
	$I_0,\ldots,I_N$ at positive mutual distances such that
	$\gamma^1(I_i)>\kappa_i$.  More explicitly, choose $\delta>0$ such that
	$\sum_i\kappa_i+(2N+1)\delta<1$.  The Gaussian distribution function
	allows us to arrange closed intervals of masses $\kappa_i+\delta$
	alternately with open gaps of mass $\delta$.

	Define $\pi_n\colon\Sph^{n-1}\to\R$ by
	$\pi_n(\theta)\coloneqq\sqrt n\,\theta_1$.  The Maxwell--Boltzmann
	distribution law \cite[Proposition~2.1]{shioya2016mmg} gives
	$(\pi_n)_*\sigma_{n-1}\to\gamma^1$ weakly.  The endpoints of every $I_i$
	are $\gamma^1$-null, and therefore
	$\sigma_{n-1}(\pi_n^{-1}(I_i))\ge\kappa_i$ for all sufficiently large
	$n$.  Moreover,
	\[
		|\pi_n(\theta)-\pi_n(\eta)|\le\sqrt n\,|\theta-\eta|.
	\]
	Thus, setting $c\coloneqq\min_{i\ne j}d_{\R}(I_i,I_j)>0$ gives the lower
	separation bound.

	For the upper bound, equip $\Sph^{n-1}$ with its spherical geodesic
	distance.  The first nonzero Laplacian eigenvalue of the unit sphere is
	$n-1$ \cite[Example~7.36]{shioya2016mmg}.  After multiplying the distance
	by $\sqrt n$, the first eigenvalue is $(n-1)/n$.  Chord distance does not
	exceed spherical geodesic distance, and removing entries from a tuple
	cannot decrease separation.  Applying
	\cite[Proposition~2.38]{shioya2016mmg} to two sets gives
\[ \Sep(\mathsf S_n;\kappa_0,\ldots,\kappa_N) \le\frac{2}{\sqrt{((n-1)/n)\min_i\kappa_i}}. \]
	The right-hand side is uniformly bounded for $n\ge2$.
	This completes the proof.
\end{proof}

\begin{thm}[Deterministic-shell convergence]
\label{thm:deterministic-shell-input}
	Let $\rho_n>0$ be deterministic.  If
	$\sinh\rho_n/\sqrt n\to+\infty$, then
	\begin{equation}\label{eq:deterministic-shell-input}
		\Pyr\left(
		\left(2\log\frac{\sinh\rho_n}{\sqrt n}\right)^{-1}
		\Sigma_{n,\rho_n}\right)
		\longrightarrow\Ple
	\end{equation}
	weakly.
\end{thm}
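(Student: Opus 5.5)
Write $A_n\coloneqq A_{n,\rho_n}=\sinh\rho_n/\sqrt n\to+\infty$ and $t_n\coloneqq(2\log A_n)^{-1}\to0$. By \Cref{prop:shell-geometry}, $t_n\Sigma_{n,\rho_n}=(\Sph^{n-1},t_nF_{A_n}(\sqrt n|\theta-\eta|),\sigma_{n-1})$. We use the standard criterion (Shioya) for weak convergence $\Pyr(Y_n)\to\Ple$. It has two parts.

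\textbf{Upper half.} Every weak limit of $\Pyr(Y_n)$ is contained in $\Ple$. This holds as soon as, for every $\eps>0$, the spaces $Y_n$ are within $\Box$-distance $o(1)$ of spaces of diameter at most $1+\eps$.

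\textbf{Lower half.} Every finite mm-space of diameter at most $1$ is a limit of spaces dominated by $Y_n$. By a density argument, this reduces to showing that for every $N$, every $\kappa_0,\dots,\kappa_N>0$ with $\sum\kappa_i<1$, and every $\delta>0$,
\[
  \liminf_n\Sep(Y_n;\kappa_0,\ldots,\kappa_N)\ge1-\delta .
\]
Indeed, given separated sets $B_i$ with mutual distances nearly $1$, the map sending $B_i$ to the $i$th point of a finite space with all distances at most $1$ (and the remaining mass suitably distributed) is close to $1$-Lipschitz. Alternatively, one can invoke directly the characterization of $\Ple$-limits through separation distance and observable diameter.

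\textbf{Proof of the lower half.} Take the sets from the preceding Lemma. They have pairwise $\mathsf S_n$-distance at least $c>0$. Since $F_A$ is increasing, their $t_n\Sigma$-distance is at least
\[
  t_nF_{A_n}(c)=\frac{2\arsinh(A_nc/2)}{2\log A_n}\longrightarrow1,
\]
because $2\arsinh(A_nc/2)=2\log A_n+2\log c+O(1)$ with $c$ fixed. This gives the required separation.

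\textbf{Proof of the upper half.} The main obstacle is that the diameter of $t_n\Sigma_{n,\rho_n}$ is $t_n\cdot2\rho_n$, which can be much larger than $1$. For example, when $\sinh\rho_n\sim n$ we have $t_n\rho_n\to1$, so the diameter tends to $2$. We therefore need a truncation of the metric that costs little in the box distance.

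Fix $M\gg1$. Outside a set of small product measure, pairs satisfy $\sqrt n|\theta-\eta|\le M$, and on such pairs the distance is at most $t_nF_{A_n}(M)\to1$. A single global set of diameter $M$ in $\mathsf S_n$ cannot carry most of the mass, however. So I would use the observable-diameter route instead, which is the correct criterion for the upper half: weak limits lie in $\Ple$ if and only if
\[
  \limsup_n\ObsDiam(Y_n;-\kappa)\le1\qquad\text{for all }\kappa>0.
\]
Let $f$ be $1$-Lipschitz on $Y_n$. Then $f$ is $(t_nF_{A_n}\circ\sqrt n\,|\cdot|)$-continuous. Using concentration on $\mathsf S_n$, the Lévy/Gaussian isoperimetric inequality gives that for every $\kappa$ there is $M_\kappa$ such that any $\kappa$-neighborhood-type set of measure at least $\kappa/2$ has its $M_\kappa$-neighborhood of measure at least $1-\kappa/2$. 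Now take two points $x,y$ in the support of $f_*\sigma$ with masses at least $\kappa/2$ on either side. Their preimages are sets $B,C$ with $\sigma(B),\sigma(C)\ge\kappa/2$, so $d_{\mathsf S_n}(B,C)\le C_\kappa$ by the Lemma's upper bound. Hence
\[
  |x-y|\le t_nF_{A_n}(C_\kappa)\longrightarrow1
\]
up to an error of $2\eps$ from choosing near-boundary quantiles. This shows that the partial diameter of $f_*\sigma$ at level $1-\kappa$ is at most $1+o(1)$, uniformly in $f$, and the upper half follows.

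The step I expect to be the main obstacle is this one: verifying that the pyramid criterion in terms of $\ObsDiam$ and $\Sep$, both governed by the single function $t_nF_{A_n}$ applied to bounded constants, exactly characterizes convergence to $\Ple$. I would cite Shioya's results on pyramids for this characterization.
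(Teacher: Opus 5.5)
Your proposal is correct. The lower inclusion is the paper's argument: the Lemma's constant $c$, pushed through the increasing map $F_{A_n}$, gives $\liminf_n\Sep\ge1$, and Shioya's dissipation criterion then puts $\Ple$ inside every subsequential limit. Your one-line "map to a finite space" justification of that reduction is only a sketch, so rely on the cited criterion as the paper does.

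The upper inclusion takes a different route. The paper stays with separation. The Lemma's upper constant $C$ gives $\limsup_n\Sep(t_n\Sigma_{n,\rho_n};\kappa_0,\kappa_1)\le1$, where $t_n=(2\log A_{n,\rho_n})^{-1}$ as in your notation. A hands-on transfer (parameters for $\Box$, Lusin's theorem, monotonicity of $\Sep$ under domination) then shows that a limit space with two positive-mass compact sets at distance $>1+4\eta$ would push the shells' separation above $1+\eta$.

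You instead convert the same constant into $\ObsDiam(t_n\Sigma_{n,\rho_n};-\kappa)\le t_nF_{A_n}(C_\kappa)\to1$. This is exactly the estimate \eqref{eq:shell-obsdiam} that the paper derives after the theorem and reuses for the L\'evy case $a\le0$ of \Cref{thm:main}. You then hand the transfer to semicontinuity of observable diameter.

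To close the step you flagged, you need only two facts.
\begin{enumerate}[label=\textup{(\roman*)}]
\item For a subsequential limit $\mathcal Q$ of $\Pyr(t_n\Sigma_{n,\rho_n})$, $\ObsDiam(\mathcal Q;-\kappa)\le\sup_{\eps>0}\limsup_n\ObsDiam(t_n\Sigma_{n,\rho_n};-(\kappa+\eps))\le1$. This is the limit formula quoted in \Cref{sec:critical-limits}, combined with monotonicity in the mass parameter.
\item If $\mu_Y$ has full support and $\ObsDiam(Y;-\kappa)\le1$ for every $\kappa>0$, then $\diam Y\le1$. Test $f=d_Y(x,\cdot)$ against small balls around two points at distance $>1$; this is where the paper's sets $K_0,K_1$ reappear.
\end{enumerate}

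Your route is shorter and lets one estimate serve both this theorem and the L\'evy regime, at the cost of an external limit formula. The paper's route is self-contained from the definition of $\Box$ and controls both inclusions with the single invariant $\Sep$. Your initial box-approximation criterion and the isoperimetric sentence are not needed and can be dropped.
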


\begin{proof}
	We have $A_{n,\rho_n}\to+\infty$.  Fix a positive tuple
	$\kappa_0,\ldots,\kappa_N$ with $\sum_i\kappa_i<1$.  Since $F_A$ is
	continuous and increasing, the definition of separation and
	\eqref{eq:shell-transform-curvature} give
	\[
		\Sep\left(
		(2\log A_{n,\rho_n})^{-1}\Sigma_{n,\rho_n};
		\kappa_0,\ldots,\kappa_N\right)
		=
		\frac{F_{A_{n,\rho_n}}\bigl(\Sep(\mathsf S_n;\kappa_0,\ldots,\kappa_N)\bigr)}
		{2\log A_{n,\rho_n}}.
	\]
	Take the constants $c,C$ from the preceding lemma.  If $A\ge1$ and
	$c\le r\le C$, then
	$\arsinh u=\log(u+\sqrt{u^2+1})$ and
	$\sqrt{u^2+1}\le u+1$ yield
	\[
		2\log A+2\log(c/2)\le F_A(r)\le2\log A+2\log(C+1).
	\]
	Therefore,
	\begin{equation}\label{eq:normalized-shell-separation}
		\Sep\left(
		(2\log A_{n,\rho_n})^{-1}\Sigma_{n,\rho_n};
		\kappa_0,\ldots,\kappa_N\right)\longrightarrow1.
	\end{equation}

	By \cite[Proposition~8.5(1)]{shioya2016mmg},
	\eqref{eq:normalized-shell-separation} implies that every weak
	subsequential limit $\mathcal Q$ of the principal pyramids of the
	normalized shells contains $\Ple$.  Here the proposition is applied
	directly with $\delta=1$.

	We prove the reverse inclusion.  Suppose that $Y\in\mathcal Q$ and
	$\diam Y>1$.  Since $\mu_Y$ has full support and is inner regular, there
	are a number $\eta>0$ and positive-mass compact sets
	$K_0,K_1\subset Y$ such that
	\[
		d_Y(K_0,K_1)>1+4\eta.
	\]
	Indeed, take sufficiently small open balls about two points at distance
	greater than $1$, and choose a positive-mass compact subset of each ball.

	Weak convergence of the pyramids allows us, after passing to a further
	subsequence if necessary, to choose mm-spaces $Y_n$ satisfying
\[ Y_n\prec(2\log A_{n,\rho_n})^{-1}\Sigma_{n,\rho_n}, \qquad \Box(Y_n,Y)<\frac1n. \]
	By the parameter definition of the box distance
	\cite[Definition~4.4]{shioya2016mmg}, there are parameters
	$\varphi_n\colon[0,1)\to Y_n$ and $\psi_n\colon[0,1)\to Y$ and a Borel
	set $E_n\subset[0,1)$ such that
	\[
		\mathcal L^1(E_n)\ge1-\frac2n,\qquad
		|d_{Y_n}(\varphi_n(s),\varphi_n(t))-d_Y(\psi_n(s),\psi_n(t))|\le\frac2n
	\]
	for every $s,t\in E_n$.  Put
	$E_{i,n}\coloneqq E_n\cap\psi_n^{-1}(K_i)$.  Then
	\[
		\mathcal L^1(E_{i,n})\ge\mu_Y(K_i)-\frac2n.
	\]
	By Lusin's theorem, there is a compact set $C_{i,n}\subset E_{i,n}$ such
	that $\varphi_n|_{C_{i,n}}$ is continuous and
	\[
		\mathcal L^1(C_{i,n})\ge\mu_Y(K_i)-\frac3n.
	\]
	Put $K_{i,n}\coloneqq\varphi_n(C_{i,n})$. This set is compact, and the
	parameter identity for $\varphi_n$ gives
	\[
		\mu_{Y_n}(K_{i,n})
		=\mathcal L^1(\varphi_n^{-1}(K_{i,n}))
		\ge\mathcal L^1(C_{i,n})
		\ge\mu_Y(K_i)-\frac3n.
	\]
	The distortion bound gives
	\[
		d_{Y_n}(K_{0,n},K_{1,n})\ge d_Y(K_0,K_1)-\frac2n.
	\]
	It follows that, for all sufficiently large $n$,
	\[
		\Sep\left(Y_n;\frac{\mu_Y(K_0)}2,\frac{\mu_Y(K_1)}2\right)>1+\eta.
	\]
	Separation is monotone under domination
	\cite[Lemma~2.25]{shioya2016mmg}, so the same separation distance of the
	normalized shell is also greater than $1+\eta$.  The two compact sets
	are disjoint, and thus
	$\bigl(\mu_Y(K_0)+\mu_Y(K_1)\bigr)/2<1$.  This contradicts
	\eqref{eq:normalized-shell-separation}.  Therefore,
	$\mathcal Q\subset\Ple$, and hence $\mathcal Q=\Ple$.

	The weak topology on pyramids is compact and metrizable
	\cite[Theorem~6.22]{shioya2016mmg}.  Every subsequence has a weakly
	convergent further subsequence, whose limit must be $\Ple$ by the
	argument above.  The whole sequence therefore converges weakly to
	$\Ple$.  This completes the proof.
\end{proof}

The observable diameter in the collapsing regime satisfies the following
estimate from the same shell representation.  For
every $0<\kappa<1$, there is $C_\kappa>0$, independent of $\rho$, such that
\begin{equation}\label{eq:shell-obsdiam}
	\ObsDiam(\Sigma_{n,\rho};-\kappa)
	\le2F_{A_{n,\rho}}(C_\kappa).
\end{equation}
Indeed, chordal distance on $\Sph^{n-1}$ is at most spherical geodesic
distance.  It follows from
\cite[Lemma~2.25, Propositions~2.19 and~2.26, and
Theorem~2.21]{shioya2016mmg} that $C_\kappa>0$ can be chosen independently
of $n$ so that all Borel sets $B,C\subset\Sph^{n-1}$ with
$\sigma_{n-1}(B),\sigma_{n-1}(C)\ge\kappa/2$ satisfy
\[ \inf_{\theta\in B,\,\eta\in C}\sqrt n\,|\theta-\eta|\le C_\kappa. \]
The cited results give a uniform bound for all sufficiently large $n$, and
enlarging the constant accounts for the remaining finitely many dimensions.
Since $F_A$ is continuous and increasing, the distance between $B$ and $C$
for the transformed metric $F_A(\sqrt n\,|\theta-\eta|)$ is at most
$F_A(C_\kappa)$.  For any real-valued $1$-Lipschitz function with respect to
this transformed metric, choose its lower and upper $\kappa/2$ quantiles.
The two quantile sets each have measure at least $\kappa/2$, and the
difference of the quantiles is at most their set distance.  The interval
between them has measure at least $1-\kappa$.  Taking the supremum over the
functions and using \eqref{eq:shell-transform-curvature} proves
\eqref{eq:shell-obsdiam}.

For $c>0$ and a pyramid $\mathcal Q$, put
\[ c\mathcal Q\coloneqq\{cY\mid Y\in\mathcal Q\}. \]
In particular,
\[ c\mathcal P_{\le D}=\mathcal P_{\le cD}. \]

\begin{thm}[Continuity of positive metric rescaling]
\label{thm:rescaling-continuity}
	Suppose that $c_n\to c>0$ and that pyramids $\mathcal Q_n$ converge
	weakly to $\mathcal Q$.  Then
	\[
		c_n\mathcal Q_n\longrightarrow c\mathcal Q
	\]
	weakly.  In particular, if $\Pyr(Y_n)\to\mathcal Q$, then
	\[
		\Pyr(c_nY_n)\longrightarrow c\mathcal Q.
	\]
\end{thm}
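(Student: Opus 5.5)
We work directly with the definition of weak convergence as Painlev\'e--Kuratowski convergence in the $\Box$-metric. The key elementary estimate is a uniform bound on the effect of rescaling in $\Box$: for an mm-space $Y$ with $\diam$-truncation in mind, we use the coupling characterization of $\Box$ stated above. Take the identity coupling of $(Y,c_nd_Y,\mu_Y)$ and $(Y,cd_Y,\mu_Y)$ restricted to the diagonal over a ball $S=\{(y,y): y\in B_Y(y_0,R)\}$. Its distortion is at most $2R|c_n-c|$, and its coupling mass defect is $\mu_Y(Y\setminus B_Y(y_0,R))$. Thus $\Box(c_nY,cY)\le\max\{\mu_Y(Y\setminus B(y_0,R)),2R|c_n-c|\}$. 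The same bound holds with $Y$ replaced by any $Y'$ that is $\Box$-close to $Y$, once we also use the scaling inequality $\Box(tY,tY')\le\max\{1,t\}\Box(Y,Y')$. That inequality follows from the same coupling characterization: a coupling with $\pi(S)\ge1-\delta$ and $\operatorname{dis}S\le\eta$ for $(Y,Y')$ gives $\operatorname{dis}\le t\eta$ after scaling.

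First we check the upper (limsup) inclusion. Let $Z$ be a limit of $c_{n_k}Y_k$ with $Y_k\in\mathcal Q_{n_k}$. We show $Z\in c\mathcal Q$ by considering $c^{-1}Z$. By the scaling inequality with $t=c_{n_k}^{-1}$, which is bounded, we have $Y_k\to c^{-1}Z$ up to an error controlled by $\Box(c_{n_k}^{-1}c\,(c^{-1}Z),\,c^{-1}Z)$. That error tends to zero by the ball-truncation estimate applied to the fixed space $c^{-1}Z$, because tightness of the single measure $\mu_Z$ makes $\mu_Z(Z\setminus B(z_0,R))$ small for large $R$. Since $\mathcal Q_n\to\mathcal Q$, it follows that $c^{-1}Z\in\mathcal Q$.

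Next we check the lower (liminf) inclusion. Given $Z=cY$ with $Y\in\mathcal Q$, choose $Y_n\in\mathcal Q_n$ with $Y_n\to Y$. Then the triangle inequality gives
\[ \Box(c_nY_n,cY)\le\Box(c_nY_n,c_nY)+\Box(c_nY,cY)\le\max\{1,c_n\}\Box(Y_n,Y)+\Box(c_nY,cY), \]
and both terms tend to zero, the second by the truncation estimate for the fixed $Y$. The main point to handle carefully is exactly this reduction: every convergence estimate is applied to a \emph{fixed} space, so we never need uniform tightness of the family $\mathcal Q_n$. This avoids the one place where the argument could fail, namely the unbounded-distortion term $2R|c_n-c|$.

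Finally, for the "in particular" statement we need $\Pyr(c_nY_n)=c_n\Pyr(Y_n)$. This holds because a map $X\to Y$ is $1$-Lipschitz for the metrics $d_X,d_Y$ exactly when it is $1$-Lipschitz for $c_nd_X,c_nd_Y$, and push-forwards of measures are unchanged by rescaling the metric. Hence $Y'\prec X$ if and only if $c_nY'\prec c_nX$, and the first part of the statement applies.
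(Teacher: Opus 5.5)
Your proof is correct and follows the paper's argument. You use the same two estimates, namely $\Box(c_nY_n,c_nY)\le\max\{1,c_n\}\Box(Y_n,Y)$ and a diagonal-coupling bound applied only to a fixed space, the same verification of the two Painlev\'e--Kuratowski inclusions, and the same observation that rescaling preserves domination. The only difference is cosmetic: you truncate to a ball, which should be taken closed so that the diagonal set is closed, giving distortion at most $2R|c_n-c|$, whereas the paper truncates to a compact set $K$ with distortion at most $|c_n-c|\diam K$.
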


\begin{proof}
	We first verify continuity for mm-spaces.  If $Y_n$ converges to $Y$ in
	box distance, then
\[ \Box(c_nY_n,c_nY) \le \max\{1,c_n\}\Box(Y_n,Y). \]
	For fixed $Y$ and $\eps>0$, choose a compact set $K\subset Y$ of measure
	at least $1-\eps$.  The diagonal coupling on $K$ gives
\[ \Box(c_nY,cY) \le \max\{\eps,|c_n-c|\diam K\}. \]
	Letting first $n\to+\infty$ and then $\eps\downarrow0$ proves that
	$c_nY_n$ converges to $cY$ in box distance.

	Use the sequential Painlev\'e--Kuratowski criterion for weak convergence
	of pyramids
	\cite[Definition~6.4 and Proposition~6.9]{shioya2016mmg}.  If
	$Y=cZ\in c\mathcal Q$, there are $Z_n\in\mathcal Q_n$ that converge to
	$Z$.  The preceding continuity gives $c_nZ_n\to Y$.  Conversely, suppose
	along a subsequence that $Y_n\in c_n\mathcal Q_n$ and $Y_n\to Y$.  Then
	$c_n^{-1}Y_n\in\mathcal Q_n$ and
	$c_n^{-1}Y_n\to c^{-1}Y$.  The upper-limit condition gives
	$c^{-1}Y\in\mathcal Q$, and therefore $Y\in c\mathcal Q$.  Finally,
	positive rescaling preserves domination, so
	$c_n\Pyr(Y_n)=\Pyr(c_nY_n)$.  This completes the proof.
\end{proof}

\begin{proof}[Proof of \Cref{thm:main}]
	By \Cref{prop:radial-coupling} and the stability of weak pyramid limits
	under vanishing box distance
	\cite[Proposition~5.5 and Theorem~6.25]{shioya2016mmg}, it is enough to
	work with $s_n\Sigma_{n,\rho_n}$.  If $a>0$, then
	$A_{n,\rho_n}\to+\infty$ and
	\[
		s_n\Sigma_{n,\rho_n}
		=
		(2a_{n,\rho_n})
		\left((2\log A_{n,\rho_n})^{-1}\Sigma_{n,\rho_n}\right).
	\]
	\Cref{thm:deterministic-shell-input,thm:rescaling-continuity} show that
	the pyramids converge weakly to $\mathcal P_{\le2a}$.

	Suppose that $a\le0$.  For $A,C>0$,
\[ F_A(C) \le2\log(1+AC) \le2\log(1+C)+2(\log A)_+. \]
	Thus, for every fixed $C>0$,
	\[
		s_nF_{A_{n,\rho_n}}(C)\longrightarrow0.
	\]
	By \eqref{eq:shell-obsdiam}, the sequence
	$(s_n\Sigma_{n,\rho_n})$ is a L\'evy family.  Its pyramids therefore
	converge weakly to the one-point pyramid $\mathcal P_{\le0}$
	\cite[Corollary~5.8 and Theorem~6.25]{shioya2016mmg}.

	For the signed-distance assertion, choose polar coordinates in which the
	first angular coordinate is normal to $H_n$.
	Then
	\begin{equation}\label{eq:signed-distance-formula}
		u_{H_n}(\rho,\theta)=\arsinh(\sinh\rho\,\theta_1).
	\end{equation}
	Fermi coordinates around $H_n$ show that $u_{H_n}$ is $1$-Lipschitz.  The
	random variables $\sqrt n\,\Theta_{n,1}$ converge in distribution to
	$N(0,1)$ \cite[Proposition~2.1]{shioya2016mmg}.  Their signs are symmetric
	two-point random variables independent of their absolute values.

	If $a>0$, then $A_{n,\rho_n}\to+\infty$ and
	\[
		s_n\left[
		\arsinh\bigl(A_{n,\rho_n}|\sqrt n\,\Theta_{n,1}|\bigr)
		-\log A_{n,\rho_n}
		-\log\bigl(2|\sqrt n\,\Theta_{n,1}|\bigr)\right]
		\overset{\mathrm P}{\longrightarrow}0.
	\]
	Moreover,
\[ s_n\log|\sqrt n\,\Theta_{n,1}| \overset{\mathrm P}{\longrightarrow}0. \]
	For the latter convergence, first restrict
	$|\sqrt n\,\Theta_{n,1}|$ to a fixed interval bounded away from zero and
	infinity, and then use convergence to a nonatomic Gaussian
	distribution.  It follows that
	$s_n|u_{H_n}(\rho_n,\Theta_n)|\to a$ in probability.

	If $a\le0$, fix $M>0$.  On
	$|\sqrt n\,\Theta_{n,1}|\le M$,
	\[
		0\le s_n|u_{H_n}(\rho_n,\Theta_n)|
		=\frac{s_n}{2}F_{A_{n,\rho_n}}\bigl(2|\sqrt n\,\Theta_{n,1}|\bigr)
		\le\frac{s_n}{2}F_{A_{n,\rho_n}}(2M)\longrightarrow0.
	\]
	The convergence of $\sqrt n\,\Theta_{n,1}$ to a Gaussian distribution
	also makes this sequence tight.  Letting first $n\to+\infty$ and then
	$M\to+\infty$ shows that
	$s_n|u_{H_n}(\rho_n,\Theta_n)|\to0$ in probability.  Finally,
	the radial coupling changes signed distance by at most
	$s_n|\mathbf R_n-\rho_n|$.  This proves
	\eqref{eq:general-signed-witness}.
	This completes the proof.
\end{proof}

\begin{cor}[Raw radial Dirac limit]\label{cor:raw-dirac}
	Suppose that $s_n\to0$ and $s_n\mathbf R_n\to r>0$ in probability.  Assume also
	that
	\[
		s_n\log n\longrightarrow\lambda\in[0,+\infty].
	\]
	Then
	\begin{equation}\label{eq:raw-dirac-limit}
		\Pyr(s_nX_n)\longrightarrow
		\mathcal P_{\le(2r-\lambda)_+}
	\end{equation}
	weakly.  If $\lambda<2r$, then signed distance from a hyperplane
	converges in distribution to
\[ \frac12\delta_{-r+\lambda/2} +\frac12\delta_{r-\lambda/2}. \]
	In particular, if $s_n\log n\to0$, then the radial Dirac radius, the
	effective pyramid radius, and half the limiting distance between fixed
	finite samples all equal $r$.
\end{cor}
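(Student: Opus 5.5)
The plan is to reduce the corollary to \Cref{thm:main} by choosing the deterministic radii $\rho_n\coloneqq r/s_n$. Then $s_n(\mathbf R_n-\rho_n)=s_n\mathbf R_n-r\to0$ in probability, so \eqref{eq:general-dirac-assumption} holds. The only remaining work is to compute the limit of the effective radius $a_{n,\rho_n}$ and to identify it with $r-\lambda/2$ (interpreted as $-\infty$ when $\lambda=+\infty$).

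For the effective radius, I will write
\[
	a_{n,\rho_n}=s_n\log\sinh\rho_n-\tfrac12 s_n\log n .
\]
Since $s_n\to0$ and $r>0$, we have $\rho_n\to+\infty$. Hence
\[
	\log\sinh\rho_n=\rho_n-\log2+\log(1-e^{-2\rho_n})=\rho_n+O(1),
\]
and therefore $s_n\log\sinh\rho_n=r+O(s_n)\to r$. If $\lambda<+\infty$, this gives $a_{n,\rho_n}\to r-\lambda/2\in\R$. If $\lambda=+\infty$, it gives $a_{n,\rho_n}\to-\infty$. In both cases the hypothesis of \Cref{thm:main} holds with $a=r-\lambda/2\in[-\infty,+\infty)$.

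\Cref{thm:main} then yields $\Pyr(s_nX_n)\to\mathcal P_{\le2a_+}$. Since $2a_+=(2r-\lambda)_+$, with the convention $(-\infty)_+=0$ covering $\lambda=+\infty$, this is \eqref{eq:raw-dirac-limit}. When $\lambda<2r$ we have $a=r-\lambda/2>0$, so \eqref{eq:general-signed-witness} gives the two-point limit $\frac12\delta_{-(r-\lambda/2)}+\frac12\delta_{r-\lambda/2}$.

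For the final sentence, suppose $\lambda=0$. The pyramid limit is then $\mathcal P_{\le2r}$, so the effective radius is $a=r$, matching the radial Dirac radius $r$. For the fixed finite samples, \Cref{prop:equilateral} applies with the same $\rho_n$, because $s_n\rho_n=r$, $s_n\to0$, and the coupling hypothesis is satisfied. It shows that pairwise scaled distances converge in probability to $2r$, whose half is $r$.

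There is no real obstacle here. The only points needing care are the $O(1)$ correction in $\log\sinh\rho_n$, which is killed by $s_n\to0$, and the separate treatment of the case $\lambda=+\infty$ via $a=-\infty$.
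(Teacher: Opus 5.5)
Your proposal is correct and takes the same approach as the paper's proof. Both set $\rho_n=r/s_n$ and expand $s_n\log\sinh\rho_n=r-s_n\log2+s_n\log(1-e^{-2r/s_n})$ to get $a_{n,\rho_n}\to r-\lambda/2$, with the limit $-\infty$ when $\lambda=+\infty$, and then invoke \Cref{thm:main} and \Cref{prop:equilateral}.
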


\begin{proof}
	Put $\rho_n\coloneqq r/s_n$.  Then
	$s_n(\mathbf R_n-\rho_n)\to0$ in probability and
	\[
		a_{n,\rho_n}
		=
		r-\frac12s_n\log n-s_n\log2
		+s_n\log(1-e^{-2r/s_n})
		\longrightarrow r-\frac\lambda2.
	\]
	Apply \Cref{thm:main,prop:equilateral}.  This completes the proof.
\end{proof}

\section{Modal shells and tangential curvature}
\label{sec:modal-curvature}

Assume from now on that $\mu_n$ has a smooth radial density relative to
hyperbolic volume, with normalizing constant $Z_n$:
\begin{equation}\label{eq:radial-gibbs}
	d\mu_n(x)
	=
	Z_n^{-1}e^{-V_n(r(x))}
	\,d\operatorname{vol}_{\Hyp^n}(x),
	\qquad
	V_n\in C^2((0,+\infty)).
\end{equation}
The tangential eigenvalue below provides a curvature interpretation of the
effective shell radius already obtained, rather than driving the pyramid
limit.
The one-dimensional radial density is proportional to
\begin{equation}\label{eq:radial-density} p_n(r)\coloneqq e^{-V_n(r)}(\sinh r)^{n-1}. \end{equation}

\begin{prop}[Curvature identity at a radial mode]
\label{prop:modal-curvature}
	Let $m_n>0$ be an interior mode of $p_n$.  Define the Bakry--\'Emery
	Ricci tensor by
	\[
		\Ric_{V_n}\coloneqq\Ric_g+\Hess V_n,
	\]
	and put $g_T\coloneqq g-dr\otimes dr$.  Then
	\begin{equation}\label{eq:general-weighted-ricci}
		\Ric_{V_n}
		=
		\bigl(V_n''(r)-(n-1)\bigr)\,dr\otimes dr
		+
		\bigl(V_n'(r)\coth r-(n-1)\bigr)\,g_T.
	\end{equation}
	The modal equation is
\begin{equation}\label{eq:modal-equation} V_n'(m_n)=(n-1)\coth m_n. \end{equation}
	In particular, along the tangential directions of the modal shell,
	\begin{equation}\label{eq:modal-tangential-curvature}
		\left.\Ric_{V_n}\right|_{T\Sigma_{n,m_n}}
		=
		\frac{n-1}{\sinh^2m_n}
		\left.g\right|_{T\Sigma_{n,m_n}}.
	\end{equation}
	If $\lambda_n^{\mathrm{tan}}(m_n)$ denotes the tangential eigenvalue,
	then
	\begin{equation}\label{eq:curvature-matching}
		\lambda_n^{\mathrm{tan}}(m_n)
		=
		(n-1)K_{n,m_n}^{\Sigma}
		=
		\frac{n-1}{\sinh^2m_n}.
	\end{equation}
	For $n\ge3$, this identity can also be written as
\[ \left.\Ric_{V_n}\right|_{T\Sigma_{n,m_n}} = \frac{n-1}{n-2}\Ric_{\Sigma_{n,m_n}}. \]
\end{prop}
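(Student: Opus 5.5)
We work throughout in geodesic polar coordinates $g=dr^2+\sinh^2r\,g_{\Sph^{n-1}}$ about $o$, where all terms split into a radial and a tangential part. There are three computations.

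First, since $\Hyp^n$ has constant curvature $-1$, $\Ric_g=-(n-1)g=-(n-1)(dr\otimes dr+g_T)$. Second, for a radial function $V_n(r)$ we have $\Hess V_n=V_n''(r)\,dr\otimes dr+V_n'(r)\Hess r$. The Hessian of the distance function is $\Hess r=\coth r\,g_T$. This follows because the second fundamental form of the geodesic sphere of radius $r$ is $\tfrac12\partial_r(\sinh^2r)\,g_{\Sph^{n-1}}=\sinh r\cosh r\,g_{\Sph^{n-1}}=\coth r\,g_T$, and $\Hess r(\partial_r,\cdot)=0$ because $\partial_r$ is a unit geodesic field. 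Adding the two expressions gives \eqref{eq:general-weighted-ricci}. This formula holds on $\Hyp^n\setminus\{o\}$, which suffices, since $V_n$ is only assumed $C^2$ on $(0,+\infty)$.

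Next we derive the modal equation. At an interior mode $m_n$ of $p_n$ from \eqref{eq:radial-density}, we have $(\log p_n)'(m_n)=0$, which reads $-V_n'(m_n)+(n-1)\coth m_n=0$. This is \eqref{eq:modal-equation}. Substituting it into the tangential coefficient of \eqref{eq:general-weighted-ricci} gives
\[
V_n'(m_n)\coth m_n-(n-1)=(n-1)(\coth^2m_n-1)=\frac{n-1}{\sinh^2m_n},
\]
and $g_T$ restricted to $T\Sigma_{n,m_n}$ equals $g|_{T\Sigma_{n,m_n}}$. This proves \eqref{eq:modal-tangential-curvature}. Because the restriction is a scalar multiple of the metric, every tangential eigenvalue equals $(n-1)/\sinh^2m_n$. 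By \eqref{eq:induced-shell-metric} this value is $(n-1)K^{\Sigma}_{n,m_n}$, which gives \eqref{eq:curvature-matching}.

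For the final identity, take $n\ge3$. The shell is isometric to a round $(n-1)$-sphere of radius $\sinh m_n$ by \eqref{eq:induced-shell-metric}, so its intrinsic Ricci tensor is $\Ric_{\Sigma_{n,m_n}}=(n-2)K^{\Sigma}_{n,m_n}\,g|_{T\Sigma}=\frac{n-2}{\sinh^2m_n}g|_{T\Sigma}$. Comparing with \eqref{eq:modal-tangential-curvature} yields the factor $\frac{n-1}{n-2}$.

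The only step requiring care is the value of $\Hess r$, in particular its sign convention and the vanishing of its radial component. I would verify it either by the Riccati equation for $\coth r$ in constant curvature $-1$, or directly from the warped-product Christoffel symbols. Everything else is substitution.
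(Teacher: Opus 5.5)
Your proposal is correct and follows the paper's proof: $\Ric_g=-(n-1)g$ plus the Hessian chain rule with $\Hess r=\coth r\,g_T$, the modal equation from the vanishing logarithmic derivative of $p_n$, substitution into the tangential coefficient, and comparison with the Ricci tensor of a round sphere of radius $\sinh m_n$. Your added justification of $\Hess r$ via the warped-product second fundamental form fills in a step the paper states without proof.
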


\begin{proof}
	Since $\Ric_g=-(n-1)g$ and
	\[
		\Hess r=\coth r\,(g-dr\otimes dr),
	\]
	the Hessian chain rule gives \eqref{eq:general-weighted-ricci}.  The
	logarithmic derivative of \eqref{eq:radial-density} vanishes at $m_n$,
	which proves \eqref{eq:modal-equation}.  Substituting this equation into
	the tangential coefficient gives
\[ V_n'(m_n)\coth m_n-(n-1) = (n-1)(\coth^2m_n-1) = \frac{n-1}{\sinh^2m_n}. \]
	The remaining identities follow from \Cref{prop:shell-geometry} and the
	Ricci curvature of a round $(n-1)$-sphere.  This completes the proof.
\end{proof}

\begin{thm}[Radial Dirac--curvature principle]
\label{thm:dirac-curvature-principle}
	Let $s_n\to0$, let $m_n$ be an interior mode of the radial density, and
	suppose that
	\begin{equation}\label{eq:modal-dirac}
		s_n(\mathbf R_n-m_n)\overset{\mathrm P}{\longrightarrow}0.
	\end{equation}
	Then
	\begin{equation}\label{eq:curvature-radius-identities}
		\begin{aligned}
a_{n,m_n} &=-\frac{s_n}{2}\log\bigl(nK_{n,m_n}^{\Sigma}\bigr)\\
&=-\frac{s_n}{2}\log\left( \frac n{n-1}\lambda_n^{\mathrm{tan}}(m_n) \right).
		\end{aligned}
	\end{equation}
	If $a_{n,m_n}\to a\in\R$, then
	\[
		\Pyr(s_nX_n)\longrightarrow\mathcal P_{\le2a_+}
	\]
	weakly, and signed distance from a hyperplane converges in distribution
	to
	\[
		\frac12\delta_{-a_+}+\frac12\delta_{a_+}.
	\]

	Suppose in addition that, for some $r>0$ and
	$\lambda\in[0,+\infty)$,
	\begin{equation}\label{eq:raw-modal-radius}
		s_nm_n\longrightarrow r,
		\qquad
		s_n\log n\longrightarrow\lambda.
	\end{equation}
	Then
	\begin{equation}\label{eq:three-radii}
		\begin{aligned}
r &= \lim_{n\to+\infty} \left(-\frac{s_n}{2}\log K_{n,m_n}^{\Sigma}\right),\\
		\left(r-\frac\lambda2\right)_+
		&=
		\lim_{n\to+\infty}
		\left[
		-\frac{s_n}{2}\log\left(
		\frac n{n-1}\lambda_n^{\mathrm{tan}}(m_n)
		\right)\right]_+.
		\end{aligned}
	\end{equation}
	In particular, if $s_n\log n\to0$, then the radial Dirac radius, the
	shell-curvature decay radius, the tangential-curvature decay radius, and
	the effective pyramid radius all equal $r$.
\end{thm}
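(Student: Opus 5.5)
The plan is to treat this as a corollary of \Cref{prop:modal-curvature} and \Cref{thm:main}, with the only extra work being the bookkeeping of limits.

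First, the identities \eqref{eq:curvature-radius-identities}. By \eqref{eq:shell-transform-curvature}, $A_{n,m_n}^{-2}=nK^{\Sigma}_{n,m_n}$. Since $a_{n,m_n}=s_n\log A_{n,m_n}$, this gives
\[
	a_{n,m_n}=-\tfrac{s_n}{2}\log\bigl(nK^{\Sigma}_{n,m_n}\bigr).
\]
By \eqref{eq:curvature-matching}, $K^{\Sigma}_{n,m_n}=\lambda_n^{\mathrm{tan}}(m_n)/(n-1)$. Substituting this gives the second expression. These are pure algebra once the mode $m_n$ exists, which it does by hypothesis.

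Second, the pyramid and signed-distance conclusions. Apply \Cref{thm:main} with $\rho_n\coloneqq m_n$. Hypothesis \eqref{eq:modal-dirac} is exactly \eqref{eq:general-dirac-assumption}, and $a_{n,m_n}\to a\in\R\subset[-\infty,+\infty)$. So \eqref{eq:general-pyramid-limit} and \eqref{eq:general-signed-witness} give both assertions directly.

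Third, under \eqref{eq:raw-modal-radius}, I would compute
\[
	-\tfrac{s_n}{2}\log K^{\Sigma}_{n,m_n}=s_n\log\sinh m_n=s_nm_n-s_n\log2+s_n\log\bigl(1-e^{-2m_n}\bigr).
\]
Because $s_nm_n\to r>0$ and $s_n\to0$, we have $m_n\to+\infty$. Hence the last two terms tend to $0$, and the limit is $r$. This is the same computation as in the proof of \Cref{cor:raw-dirac}.

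For the second line of \eqref{eq:three-radii}, the first identity gives
\[
	-\tfrac{s_n}{2}\log\Bigl(\tfrac{n}{n-1}\lambda_n^{\mathrm{tan}}(m_n)\Bigr)=a_{n,m_n}=s_n\log\sinh m_n-\tfrac12 s_n\log n .
\]
This tends to $r-\lambda/2$, which is finite because $\lambda<\infty$. Since $x\mapsto x_+$ is continuous, the limit of the positive parts is $(r-\lambda/2)_+$. This also confirms that $a_{n,m_n}\to a=r-\lambda/2$, so the effective pyramid radius from the previous paragraph is $(r-\lambda/2)_+$.

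The final sentence is the case $\lambda=0$. There every listed quantity equals $r$: the radial Dirac radius is $\lim s_n\mathbf R_n=\lim s_nm_n$ by \eqref{eq:modal-dirac}. The shell-curvature decay radius is $r$ by the first line of \eqref{eq:three-radii}. The tangential-curvature decay radius and the effective pyramid radius are $(r)_+=r$.

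I expect no real obstacle, since everything reduces to earlier results. The point needing care is that \Cref{thm:main} requires a genuine limit $a$. In the last part this is supplied by the computation above rather than assumed, and it uses $\lambda<\infty$ so that $a\in\R$. One should also record that $m_n\to\infty$, which follows from $r>0$ and $s_n\to0$, to justify dropping the error term in $\log\sinh m_n$.
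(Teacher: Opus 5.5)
Your proposal is correct and follows the paper's proof essentially step for step. Both derive the identities from \Cref{prop:shell-geometry,prop:modal-curvature}, apply \Cref{thm:main} with $\rho_n=m_n$, and substitute the expansion $s_n\log\sinh m_n=s_nm_n-s_n\log2+s_n\log(1-e^{-2m_n})\to r$ together with $s_n\log n\to\lambda$.
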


\begin{proof}
	Equation \eqref{eq:curvature-radius-identities} follows from
	\Cref{prop:shell-geometry,prop:modal-curvature}.  Apply \Cref{thm:main}
	with $\rho_n=m_n$ to obtain the pyramid and signed-distance limits.
	Finally,
\[ s_n\log\sinh m_n = s_nm_n-s_n\log2+s_n\log(1-e^{-2m_n}) \longrightarrow r. \]
	Substitution of this limit and $s_n\log n\to\lambda$ in
	\eqref{eq:curvature-radius-identities} proves
	\eqref{eq:three-radii}.  This completes the proof.
\end{proof}

\begin{prop}[An exact zero-curvature shell near the modal shell]
\label{prop:zero-shell-transversality}
	Put
\begin{equation}\label{eq:F-zero} \Xi_n(r)\coloneqq V_n'(r)-(n-1)\tanh r. \end{equation}
	The tangential Bakry--\'Emery Ricci tensor vanishes on the shell of
	radius $\zeta>0$ if and only if $\Xi_n(\zeta)=0$.  Moreover,
\begin{equation}\label{eq:F-at-mode} \Xi_n(m_n)=\frac{2(n-1)}{\sinh(2m_n)}. \end{equation}

	Suppose that $c_n,\delta_n>0$ satisfy
	\begin{equation}\label{eq:transversality}
		\Xi_n'(r)\ge c_n
		\quad(m_n-\delta_n\le r\le m_n),
		\qquad
		\Xi_n(m_n)\le c_n\delta_n.
	\end{equation}
	Then there is a unique
	$\zeta_n\in[m_n-\delta_n,m_n]$ such that $\Xi_n(\zeta_n)=0$, and
	\begin{equation}\label{eq:zero-mode-gap-general}
		0\le m_n-\zeta_n
		\le
		\frac{2(n-1)}{c_n\sinh(2m_n)}.
	\end{equation}
	Let $s_n>0$ with $s_n\to0$.  In particular,
	\begin{equation}\label{eq:zero-mode-scaled-gap}
		\frac{s_n(n-1)}{c_n\sinh(2m_n)}
		\longrightarrow0
	\end{equation}
	implies $s_n(m_n-\zeta_n)\to0$.  If also
	$\liminf_{n\to+\infty}\zeta_n>0$, then
	\[
		a_{n,m_n}-a_{n,\zeta_n}\longrightarrow0.
	\]
	If, in addition,
	\[
		s_n(\mathbf R_n-m_n)\overset{\mathrm P}{\longrightarrow}0,
	\]
	then the modal and exact zero-curvature shells have the same radial
	Dirac, pyramid, and signed-distance limits.
\end{prop}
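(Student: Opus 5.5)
The plan starts from the tangential coefficient in \eqref{eq:general-weighted-ricci}. On the shell of radius $\zeta>0$ the tangential Bakry--\'Emery Ricci tensor is
\[
\bigl(V_n'(\zeta)\coth\zeta-(n-1)\bigr)g_T .
\]
Since $\coth\zeta>0$, this coefficient vanishes if and only if $V_n'(\zeta)=(n-1)\tanh\zeta$, that is, $\Xi_n(\zeta)=0$. For \eqref{eq:F-at-mode}, substitute the modal equation \eqref{eq:modal-equation}:
\[
\Xi_n(m_n)=(n-1)(\coth m_n-\tanh m_n)=\frac{n-1}{\sinh m_n\cosh m_n}=\frac{2(n-1)}{\sinh(2m_n)} .
\]

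For existence and uniqueness of $\zeta_n$, use the mean value theorem on $[m_n-\delta_n,m_n]$. There $\Xi_n$ is $C^1$ because $V_n\in C^2$, and the interval lies in $(0,\infty)$, which we assume implicitly (or require $\delta_n<m_n$). The first condition in \eqref{eq:transversality} gives
\[
\Xi_n(m_n-\delta_n)\le \Xi_n(m_n)-c_n\delta_n\le 0,
\]
while $\Xi_n(m_n)>0$. The intermediate value theorem then yields a zero $\zeta_n$, and strict monotonicity ($\Xi_n'\ge c_n>0$) makes it unique. Applying the mean value theorem to $\zeta_n$ and $m_n$ gives $c_n(m_n-\zeta_n)\le\Xi_n(m_n)-\Xi_n(\zeta_n)=\Xi_n(m_n)$, which is \eqref{eq:zero-mode-gap-general}. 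Multiplying by $s_n$ gives $s_n(m_n-\zeta_n)\le 2\cdot$\eqref{eq:zero-mode-scaled-gap}$\,\to0$.

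For the effective radii, write
\[
a_{n,m_n}-a_{n,\zeta_n}=s_n\bigl(\log\sinh m_n-\log\sinh\zeta_n\bigr).
\]
By the mean value theorem this equals $s_n\coth\xi_n\,(m_n-\zeta_n)$ for some $\xi_n\in[\zeta_n,m_n]$. The assumption $\liminf\zeta_n>0$ gives $\zeta_n\ge\epsilon>0$ for large $n$, so $\coth\xi_n\le\coth\epsilon$ is bounded. Hence the difference tends to $0$. This uniform lower bound is the one point needing care: without it $\log\sinh$ blows up near $0$, so it is the main (though mild) obstacle.

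Finally, combining $s_n(\mathbf R_n-m_n)\to0$ in probability with the deterministic $s_n(m_n-\zeta_n)\to0$ gives $s_n(\mathbf R_n-\zeta_n)\to0$ in probability, so both shells satisfy \eqref{eq:general-dirac-assumption}. Both $a_{n,m_n}$ and $a_{n,\zeta_n}$ then share the same limits along any subsequence where one converges in $[-\infty,\infty)$. Applying \Cref{thm:main} with $\rho_n=m_n$ and with $\rho_n=\zeta_n$ gives identical pyramid limits $\mathcal P_{\le 2a_+}$ and signed-distance limits. By \Cref{prop:radial-coupling}, $\Box(s_n\Sigma_{n,m_n},s_n\Sigma_{n,\zeta_n})\to0$, which confirms that the shells are asymptotically indistinguishable.
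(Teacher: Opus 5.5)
Your proposal is correct and follows the paper's proof essentially step for step. It uses the tangential coefficient in \eqref{eq:general-weighted-ricci} for the zero-curvature criterion, the modal equation for \eqref{eq:F-at-mode}, the intermediate and mean value theorems for existence, uniqueness and \eqref{eq:zero-mode-gap-general}, the mean value theorem for $\log\sinh$ with $\coth\zeta_n$ uniformly bounded thanks to $\liminf\zeta_n>0$, and finally \Cref{thm:main} applied with both centers $m_n$ and $\zeta_n$. Your extra remarks (the implicit requirement $m_n-\delta_n>0$, the subsequence bookkeeping for $a_{n,m_n}$, and the box comparison of the two shells via \Cref{prop:radial-coupling}) are harmless refinements of the same argument.
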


\begin{proof}
	The tangential coefficient in \eqref{eq:general-weighted-ricci} gives the
	zero-curvature equation.  The modal equation gives
\[ \Xi_n(m_n) = (n-1)(\coth m_n-\tanh m_n) = \frac{2(n-1)}{\sinh(2m_n)}. \]
	By \eqref{eq:transversality},
\[ \Xi_n(m_n-\delta_n) \le \Xi_n(m_n)-c_n\delta_n \le0 < \Xi_n(m_n). \]
	The intermediate value theorem and strict monotonicity on the interval
	give the unique zero $\zeta_n$.  The mean value theorem proves
	\eqref{eq:zero-mode-gap-general}.

	Under the last assumption, $\coth\zeta_n$ is uniformly bounded for all
	large $n$.  Applying the mean value theorem to $\log\sinh r$ gives
\[ 0\le a_{n,m_n}-a_{n,\zeta_n} \le s_n(m_n-\zeta_n)\coth\zeta_n \longrightarrow0. \]
	The scaled difference between the radial centers also converges to zero,
	so the remaining assertions follow from \Cref{thm:main}.
	This completes the proof.
\end{proof}

\begin{rem}
	This exponential proximity is a feature of the Gaussian potential
	$V(r)=r^2/2\sigma^2$, not a general phenomenon for radial densities on
	$\mathbb H^n$: curvature plays no role in the convergence itself, which
	only requires a radius sequence within the same tolerance of the mode.
	We record the zero-curvature shell nonetheless because it is a curious
	feature of this specific example: although $\mathbb H^n$ has constant
	curvature $-1$, the weighted curvature associated with the Gaussian
	potential nearly vanishes exactly where the concentration takes place.
\end{rem}

\section{The two hyperbolic Gaussians}\label{sec:gaussians}

Fix $\sigma>0$.  With $Z^{\mathrm R}_{n,\sigma}$ as the normalizing constant,
the Riemannian Gaussian is defined by
\begin{equation}\label{eq:riemannian-gaussian}
	d\mu^{\mathrm R}_{n,\sigma}(x)
	\coloneqq
	\frac1{Z^{\mathrm R}_{n,\sigma}}
	\exp\left(-\frac{r(x)^2}{2\sigma^2}\right)
	d\operatorname{vol}_{\Hyp^n}(x).
\end{equation}
The wrapped Gaussian $\mu^{\mathrm W}_{n,\sigma}$ is the pushforward under
$\exp_o$ of
$N(0,\sigma^2I_n)$ on $T_o\Hyp^n$.  Its radial potential relative to
hyperbolic volume is
\begin{equation}\label{eq:wrapped-potential}
	V^{\mathrm W}_{n,\sigma}(r)
	\coloneqq
	\frac{r^2}{2\sigma^2}
	+(n-1)\log\frac{\sinh r}{r}.
\end{equation}
The quotient $\sinh r/r$ is interpreted through its smooth extension at the
origin.

Work in statistics and machine learning on hyperbolic Gaussian and
wrapped-normal-type distributions is primarily concerned with modeling,
sampling, and inference.  A foundational construction in this direction is the
wrapped normal distribution on hyperbolic space
\cite{nagano2019wrapped}.  Recent work includes finite mixture modeling with
Riemannian Gaussian distributions and profile-likelihood inference for
anisotropic hyperbolic wrapped normal models
\cite{you2026mixture,you2026profile}.  The question here is different: we
study high-dimensional metric-measure limits and phase-transition scales for
these distributions.

Write $\RGauss{n}{\sigma}$ for the Riemannian Gaussian mm-space defined by
\eqref{eq:riemannian-gaussian}, and write $\WGauss{n}{\sigma}$ for the wrapped
Gaussian mm-space.  The latter notation records the pushforward construction
by $\exp_o$, whereas the former gives a separate name to the space defined
intrinsically by a density against hyperbolic volume.  In both names the
subscript is the parameter of the Gaussian density, which in Euclidean
geometry is the variance.  Let
$\mathbf X^{\mathrm R}_{n,\sigma}$ and
$\mathbf X^{\mathrm W}_{n,\sigma}$ have distributions
$\mu^{\mathrm R}_{n,\sigma}$ and
$\mu^{\mathrm W}_{n,\sigma}$, respectively, and put
\[
	\mathbf R^{\mathrm R}_{n,\sigma}
	\coloneqq r(\mathbf X^{\mathrm R}_{n,\sigma}),
	\qquad
	\mathbf R^{\mathrm W}_{n,\sigma}
	\coloneqq r(\mathbf X^{\mathrm W}_{n,\sigma}).
\]
The hyperbolic polar volume element is
\[ d\operatorname{vol}_{\Hyp^n} = (\sinh r)^{n-1}\,dr\,d\omega_{n-1}. \]
For the wrapped Gaussian, the exponential-map Jacobian cancels this factor,
leaving $r^{n-1}$ in the radial density.

\begin{thm}[Radial limits for the Gaussians]
\label{thm:gaussian-radial-laws}
	Put
	\[
		\bar r^{\mathrm R}_{n,\sigma}
		\coloneqq(n-1)\sigma^2,
		\qquad
		\bar r^{\mathrm W}_{n,\sigma}
		\coloneqq\sigma\sqrt n.
	\]
	As $n\to+\infty$,
	\begin{align}
		\mathbf R^{\mathrm R}_{n,\sigma}
		-\bar r^{\mathrm R}_{n,\sigma}
		&\longrightarrow N(0,\sigma^2)
		&&\text{in total variation},
		\label{eq:R-radial-law}\\
		\mathbf R^{\mathrm W}_{n,\sigma}
		-\bar r^{\mathrm W}_{n,\sigma}
		&\overset{\mathrm d}{\longrightarrow}N(0,\sigma^2/2).
		\label{eq:W-radial-law}
	\end{align}
	Moreover, $\mathbf R^{\mathrm W}_{n,\sigma}/\sigma$ has exactly the
	chi distribution with $n$ degrees of freedom.
\end{thm}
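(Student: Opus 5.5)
The plan is to work directly with the one-dimensional radial densities. For the wrapped Gaussian, the radius of $\exp_o v$ equals $|v|$ when $v\sim N(0,\sigma^2I_n)$, so $\mathbf R^{\mathrm W}_{n,\sigma}=|v|$ and $\mathbf R^{\mathrm W}_{n,\sigma}/\sigma$ is chi with $n$ degrees of freedom. Equivalently, the radial density is proportional to $r^{n-1}e^{-r^2/2\sigma^2}$, because \eqref{eq:wrapped-potential} cancels the factor $(\sinh r)^{n-1}$. Then $\mathbf R^2/\sigma^2$ is a sum of $n$ i.i.d.\ squared standard normals. The central limit theorem gives $(\mathbf R^2/\sigma^2-n)/\sqrt{2n}\to N(0,1)$. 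The delta method applied to $\sqrt{\cdot}$ then gives
\[ \mathbf R/\sigma-\sqrt n=\frac{\mathbf R^2/\sigma^2-n}{\mathbf R/\sigma+\sqrt n}. \]
The denominator is $2\sqrt n(1+o_{\mathrm P}(1))$, so the limit is $N(0,1/2)$. Multiplying by $\sigma$ gives \eqref{eq:W-radial-law}.

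For the Riemannian Gaussian the radial density is proportional to
\[ p_n(r)=e^{-r^2/2\sigma^2}(\sinh r)^{n-1}. \]
I will write $(\sinh r)^{n-1}=2^{1-n}e^{(n-1)r}(1-e^{-2r})^{n-1}$. Completing the square gives
\[ -\frac{r^2}{2\sigma^2}+(n-1)r=-\frac{(r-(n-1)\sigma^2)^2}{2\sigma^2}+\mathrm{const}. \]
After the shift $x=r-\bar r^{\mathrm R}_{n,\sigma}$, the density of $\mathbf R-\bar r$ is therefore proportional to
\[ q_n(x)=e^{-x^2/2\sigma^2}\bigl(1-e^{-2(x+\bar r)}\bigr)^{n-1}\mathbf 1_{\{x>-\bar r\}}. \]
The correction factor $h_n(x)\coloneqq(1-e^{-2(x+\bar r)})^{n-1}$ lies in $[0,1]$. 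For each fixed $x$ it tends to $1$, since $(n-1)e^{-2(x+\bar r)}=(n-1)e^{-2x}e^{-2(n-1)\sigma^2}\to0$. Dominated convergence then gives $q_n\to e^{-x^2/2\sigma^2}$ in $L^1(\R)$, with dominating function the Gaussian itself. Hence the normalizing constants also converge. Scheffé's lemma upgrades this $L^1$ convergence of the normalized densities to total-variation convergence, which proves \eqref{eq:R-radial-law}.

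I expect the only mildly delicate step to be the uniformity of the correction factor. Pointwise convergence plus domination by $e^{-x^2/2\sigma^2}$ already suffices, so no region splitting is needed. The one thing to check is the lower cutoff $x>-\bar r$, which escapes to $-\infty$ and is harmless under domination.

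For the wrapped case, convergence is only claimed in distribution, and the classical delta-method argument is standard. I will cite the CLT and Slutsky's lemma rather than reprove them.
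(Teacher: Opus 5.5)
Your proposal is correct and follows essentially the same route as the paper. In the Riemannian case you complete the square so that the centered radius has density proportional to the $N(0,\sigma^2)$ density times $\mathbf 1_{\{x>-\bar r\}}(1-e^{-2(x+\bar r)})^{n-1}\in[0,1]$, then finish by dominated convergence. In the wrapped case you use the chi representation, the central limit theorem for the squared radius, the identity $\mathbf R/\sigma-\sqrt n=(\mathbf R^2/\sigma^2-n)/(\mathbf R/\sigma+\sqrt n)$, and Slutsky's theorem, which are exactly the paper's steps.

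The only cosmetic point is that Scheff\'e's lemma is unnecessary at the end. $L^1$ convergence of the normalized densities already is convergence in total variation.
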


\begin{proof}
	The radial density of the Riemannian Gaussian is proportional to
	\[
		\exp\left(-\frac{r^2}{2\sigma^2}\right)(\sinh r)^{n-1}.
	\]
	Put $y=r-\bar r^{\mathrm R}_{n,\sigma}$ and complete the square.  The
	density of the centered radius is proportional to the density of
	$N(0,\sigma^2)$ multiplied by
	\[
		\boldsymbol{1}_{\{y>-\bar r^{\mathrm R}_{n,\sigma}\}}
		\left(1-e^{-2(\bar r^{\mathrm R}_{n,\sigma}+y)}\right)^{n-1}.
	\]
	This factor lies between zero and one and converges to one for every
	$y\in\R$.  The dominated convergence theorem shows that its integral
	also converges to one.  After normalization, the densities converge in
	$L^1$ to the Gaussian density, which proves
	\eqref{eq:R-radial-law}.

	For the wrapped Gaussian, the exponential map preserves distance from
	the origin.  Its radius is therefore the Euclidean norm of a vector with
	distribution $N(0,\sigma^2I_n)$.  This proves the chi representation.
	Applying the central limit theorem to the square of the radius gives
	\[
		\frac{(\mathbf R^{\mathrm W}_{n,\sigma}/\sigma)^2-n}{\sqrt{2n}}
		\overset{\mathrm d}{\longrightarrow}N(0,1),
		\qquad
		\frac{\mathbf R^{\mathrm W}_{n,\sigma}}{\sigma\sqrt n}
		\overset{\mathrm P}{\longrightarrow}1.
	\]
	The identity
	\[
		\mathbf R^{\mathrm W}_{n,\sigma}-\sigma\sqrt n
		=
		\frac{\sigma\sqrt{2n}}
		{\mathbf R^{\mathrm W}_{n,\sigma}/\sigma+\sqrt n}
		\frac{(\mathbf R^{\mathrm W}_{n,\sigma}/\sigma)^2-n}{\sqrt{2n}}
	\]
	and Slutsky's theorem prove \eqref{eq:W-radial-law}.
	This completes the proof.
\end{proof}

Let $m^{\mathrm R}_{n,\sigma}$ and $m^{\mathrm W}_{n,\sigma}$ denote the
modal radii.  The modal equation in \Cref{prop:modal-curvature} gives
\begin{equation}\label{eq:gaussian-modes}
	m^{\mathrm R}_{n,\sigma}
	=
	(n-1)\sigma^2\coth m^{\mathrm R}_{n,\sigma},
	\qquad
	m^{\mathrm W}_{n,\sigma}
	=
	\sigma\sqrt{n-1}.
\end{equation}
Consequently,
\begin{equation}\label{eq:mode-scales}
	\frac{m^{\mathrm R}_{n,\sigma}}n\longrightarrow\sigma^2,
	\qquad
	\frac{m^{\mathrm W}_{n,\sigma}}{\sqrt n}\longrightarrow\sigma.
\end{equation}
The centers in \Cref{thm:gaussian-radial-laws} may be replaced by the
corresponding modal radii.  Let
$\lambda^{\mathrm R}_{\mathrm{tan}}(r)$ and
$\lambda^{\mathrm W}_{\mathrm{tan}}(r)$ denote the tangential eigenvalues of
the two Bakry--\'Emery Ricci tensors.

\begin{thm}[Exact tangential zero-curvature shells for the Gaussians]
\label{thm:gaussian-zero-shells}
	For all sufficiently large $n$, there are unique radii
	$\zeta^{\mathrm R}_{n,\sigma}$ and
	$\zeta^{\mathrm W}_{n,\sigma}$ on which the corresponding tangential
	Bakry--\'Emery Ricci tensor vanishes exactly.

	For the Riemannian Gaussian,
	\begin{equation}\label{eq:R-zero-equation}
		\zeta^{\mathrm R}_{n,\sigma}
		\coth\zeta^{\mathrm R}_{n,\sigma}
		=
		(n-1)\sigma^2,
	\end{equation}
	and
	\begin{equation}\label{eq:R-zero-sandwich}
		\zeta^{\mathrm R}_{n,\sigma}
		<
		\bar r^{\mathrm R}_{n,\sigma}
		<
		m^{\mathrm R}_{n,\sigma}.
	\end{equation}
	The two gaps satisfy
	\begin{align}
		\frac{\bar r^{\mathrm R}_{n,\sigma}
		-\zeta^{\mathrm R}_{n,\sigma}}
		{2\bar r^{\mathrm R}_{n,\sigma}
		e^{-2\bar r^{\mathrm R}_{n,\sigma}}}
		&\longrightarrow1,
		\label{eq:R-zero-gap}\\
		\frac{m^{\mathrm R}_{n,\sigma}
		-\bar r^{\mathrm R}_{n,\sigma}}
		{2\bar r^{\mathrm R}_{n,\sigma}
		e^{-2\bar r^{\mathrm R}_{n,\sigma}}}
		&\longrightarrow1.
		\label{eq:R-mode-gap}
	\end{align}

	For the wrapped Gaussian,
	\begin{equation}\label{eq:W-zero-equation}
		\frac{(\zeta^{\mathrm W}_{n,\sigma})^2}
		{\sigma^2(n-1)}
		=
		1-\frac{2\zeta^{\mathrm W}_{n,\sigma}}
		{\sinh(2\zeta^{\mathrm W}_{n,\sigma})},
	\end{equation}
	and
	\begin{equation}\label{eq:W-zero-gap}
		0<m^{\mathrm W}_{n,\sigma}-\zeta^{\mathrm W}_{n,\sigma},
		\qquad
		\frac{m^{\mathrm W}_{n,\sigma}
		-\zeta^{\mathrm W}_{n,\sigma}}
		{2(m^{\mathrm W}_{n,\sigma})^2
		e^{-2m^{\mathrm W}_{n,\sigma}}}
		\longrightarrow1.
	\end{equation}
	In particular,
	\begin{equation}\label{eq:zero-scales}
		\frac{\zeta^{\mathrm R}_{n,\sigma}}n\longrightarrow\sigma^2,
		\qquad
		\frac{\zeta^{\mathrm W}_{n,\sigma}}{\sqrt n}\longrightarrow\sigma.
	\end{equation}
	The centers in \Cref{thm:gaussian-radial-laws} may also be replaced by
	the exact zero-curvature radii.
\end{thm}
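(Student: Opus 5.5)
The plan is to write the tangential eigenvalue explicitly and reduce everything to scalar equations. By \eqref{eq:general-weighted-ricci}, the tangential coefficient is $V_n'(r)\coth r-(n-1)$. It vanishes if and only if $\Xi_n(r)=V_n'(r)-(n-1)\tanh r=0$, as in \Cref{prop:zero-shell-transversality}. Each family then needs existence and uniqueness of a positive root, followed by exponential-gap asymptotics. These asymptotics are obtained by a bootstrap: first show the gap tends to zero, then replace $e^{-2\zeta}$ by $e^{-2\bar r}$ or $e^{-2m}$ at the cost of a factor tending to $1$.

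\emph{Riemannian Gaussian.} Here $V'(r)=r/\sigma^2$, so $\Xi=0$ becomes $r\coth r=(n-1)\sigma^2$, which is \eqref{eq:R-zero-equation}. The function $r\mapsto r\coth r$ is strictly increasing from $1$ to $+\infty$ on $(0,\infty)$. Hence there is a unique root $\zeta$ once $(n-1)\sigma^2>1$.
\begin{itemize}
\item For the sandwich \eqref{eq:R-zero-sandwich}: since $\coth>1$, we get $\zeta<\zeta\coth\zeta=\bar r$. The mode equation \eqref{eq:gaussian-modes} gives $m=\bar r\coth m>\bar r$.
\item For the gaps: $\bar r-\zeta=\zeta(\coth\zeta-1)=2\zeta e^{-2\zeta}/(1-e^{-2\zeta})$. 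This is $O(\bar r e^{-2\bar r/2})\to0$ after a crude first step, because $\zeta\ge\bar r-1$ eventually. It follows that $\zeta/\bar r\to1$ and $e^{-2\zeta}/e^{-2\bar r}=e^{2(\bar r-\zeta)}\to1$, which yields \eqref{eq:R-zero-gap}.
\item Similarly, $m-\bar r=\bar r(\coth m-1)\sim2\bar re^{-2m}$, and $m-\bar r\to0$ turns this into \eqref{eq:R-mode-gap}.
\end{itemize}

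\emph{Wrapped Gaussian.} Here $V'(r)=r/\sigma^2+(n-1)(\coth r-1/r)$. Using $\coth r-\tanh r=2/\sinh 2r$, the condition $\Xi=0$ becomes $r/\sigma^2=(n-1)(1/r-2/\sinh2r)$, which is \eqref{eq:W-zero-equation}. Uniqueness is the main obstacle. The naive difference function vanishes at $0$ and is negative near $0$, so it is not monotone. Instead I would rewrite the equation as
\[ \phi(r)\coloneqq\frac{1-2r/\sinh2r}{r^2}=\frac1{\sigma^2(n-1)}. \]
Then $\phi(0+)=2/3$ and $\phi(+\infty)=0$. I would prove $\phi$ strictly decreasing, first by differentiating and comparing power series of $\sinh$ and $\cosh$ termwise in $x=2r$. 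If that fails, the fallback is to get uniqueness on $[m-\delta,m]$ from \Cref{prop:zero-shell-transversality} and rule out small roots separately. Once $\phi$ is decreasing, a unique root exists as soon as $1/(\sigma^2(n-1))<2/3$.

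For the gap, use $m^2=\sigma^2(n-1)$ from \eqref{eq:gaussian-modes}. The zero equation gives
\[ m^2-\zeta^2=m^2\,\frac{2\zeta}{\sinh 2\zeta}>0, \]
so $m-\zeta=m^2\cdot2\zeta/\bigl(\sinh(2\zeta)(m+\zeta)\bigr)$. A crude bound shows $\zeta\ge m/2$, hence $m-\zeta\to0$. Then $\zeta/m\to1$, $\sinh2\zeta\sim e^{2\zeta}/2$ and $e^{2(m-\zeta)}\to1$, which give \eqref{eq:W-zero-gap}.

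\emph{Scales and replacement of centers.} The limits \eqref{eq:zero-scales} follow from \eqref{eq:mode-scales} and the vanishing gaps. Finally, all the differences $\bar r-\zeta$, $m-\bar r$, $m-\zeta$, and $\sigma\sqrt n-\sigma\sqrt{n-1}$ tend to $0$. So replacing the centers in \Cref{thm:gaussian-radial-laws} by the modal or zero-curvature radii preserves both limits. For \eqref{eq:W-radial-law} this is Slutsky's theorem. For \eqref{eq:R-radial-law} it is continuity of translation in $L^1$ for the Gaussian density.
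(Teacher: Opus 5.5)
Your proposal is correct and follows essentially the same route as the paper. It uses the same exact gap identities, $\bar r-\zeta=\zeta(\coth\zeta-1)$ and $m-\zeta=2m^2\zeta/\bigl((m+\zeta)\sinh2\zeta\bigr)$, and the same proof that $(1-2r/\sinh2r)/r^2$ is strictly decreasing, via termwise power-series comparison in $t=2r$; the paper's identity is $2\sinh^2t-t\sinh t-t^2\cosh t=\sum_{k\ge3}(4^k-4k^2)t^{2k}/(2k)!$. The differences are cosmetic: you write $m-\bar r=\bar r(\coth m-1)$ where the paper writes $2m/(e^{2m}+1)$, and you spell out the bootstrap and center-replacement steps that the paper leaves implicit.
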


\begin{proof}
	For the Riemannian Gaussian, the potential is
	\[
		\frac{r^2}{2\sigma^2}.
	\]
	Formula \eqref{eq:general-weighted-ricci} gives the zero-curvature equation
	\eqref{eq:R-zero-equation}.  The function $r\coth r$ increases strictly
	from $1$ to $+\infty$ on $(0,+\infty)$, so the positive solution is
	unique for large $n$.  The modal equation and
	$\tanh r<1<\coth r$ give \eqref{eq:R-zero-sandwich}.  The exact gap
	identities
	\[
		\bar r^{\mathrm R}_{n,\sigma}
		-\zeta^{\mathrm R}_{n,\sigma}
		=
		\frac{2\zeta^{\mathrm R}_{n,\sigma}}
		{e^{2\zeta^{\mathrm R}_{n,\sigma}}-1},
		\qquad
		m^{\mathrm R}_{n,\sigma}
		-\bar r^{\mathrm R}_{n,\sigma}
		=
		\frac{2m^{\mathrm R}_{n,\sigma}}
		{e^{2m^{\mathrm R}_{n,\sigma}}+1}
	\]
	prove \eqref{eq:R-zero-gap} and \eqref{eq:R-mode-gap}.

	Differentiating the wrapped potential gives
	\[
		\lambda^{\mathrm W}_{\mathrm{tan}}(r)
		=
		\frac{r\coth r}{\sigma^2}
		+(n-1)\left(
		\frac1{\sinh^2r}-\frac{\coth r}{r}\right).
	\]
	Its zero equation is equivalent to \eqref{eq:W-zero-equation}.  The
	function
\[ r\longmapsto \frac{1-2r/\sinh(2r)}{r^2} \]
	decreases strictly from $2/3$ to zero on $(0,+\infty)$.  To verify the
	sign of its derivative, put $t=2r$ and use
\[ 2\sinh^2t-t\sinh t-t^2\cosh t = \sum_{k=3}^{\infty} \frac{4^k-4k^2}{(2k)!}t^{2k}>0. \]
	Thus \eqref{eq:W-zero-equation} has a unique positive solution for all
	large $n$.  Moreover,
	\[
		m^{\mathrm W}_{n,\sigma}-\zeta^{\mathrm W}_{n,\sigma}
		=
		\frac{
		2(m^{\mathrm W}_{n,\sigma})^2
		\zeta^{\mathrm W}_{n,\sigma}}
		{(m^{\mathrm W}_{n,\sigma}
		+\zeta^{\mathrm W}_{n,\sigma})
		\sinh(2\zeta^{\mathrm W}_{n,\sigma})}.
	\]
	This proves \eqref{eq:W-zero-gap}.  The scale limits and centered radial
	limits now follow from \Cref{thm:gaussian-radial-laws}.
	This completes the proof.
\end{proof}

\begin{cor}[Coincidence of four radii for the Gaussians]
\label{cor:gaussian-coincidence}
	The modal radius, exact zero-curvature radius, shell-curvature decay
	radius, and tangential-curvature decay radius satisfy
	\begin{align}
		\frac{m^{\mathrm R}_{n,\sigma}}n,\quad
		\frac{\zeta^{\mathrm R}_{n,\sigma}}n,\quad
		-\frac1{2n}\log K_{n,m^{\mathrm R}_{n,\sigma}}^\Sigma,\quad
		-\frac1{2n}\log\left(
		\frac n{n-1}
		\lambda^{\mathrm R}_{\mathrm{tan}}
		(m^{\mathrm R}_{n,\sigma})\right)
		&\longrightarrow\sigma^2,
		\label{eq:R-radius-coincidence}\\
		\frac{m^{\mathrm W}_{n,\sigma}}{\sqrt n},\quad
		\frac{\zeta^{\mathrm W}_{n,\sigma}}{\sqrt n},\quad
		-\frac1{2\sqrt n}
		\log K_{n,m^{\mathrm W}_{n,\sigma}}^\Sigma,\quad
		-\frac1{2\sqrt n}\log\left(
		\frac n{n-1}
		\lambda^{\mathrm W}_{\mathrm{tan}}
		(m^{\mathrm W}_{n,\sigma})\right)
		&\longrightarrow\sigma.
		\label{eq:W-radius-coincidence}
	\end{align}
	Moreover,
	\begin{align}
		\Box\left(
		n^{-1}\RGauss{n}{\sigma},
		n^{-1}\Sigma_{n,\zeta^{\mathrm R}_{n,\sigma}}
		\right)&\longrightarrow0,
		\label{eq:R-zero-box}\\
		\Box\left(
		n^{-1/2}\WGauss{n}{\sigma},
		n^{-1/2}\Sigma_{n,\zeta^{\mathrm W}_{n,\sigma}}
		\right)&\longrightarrow0.
		\label{eq:W-zero-box}
	\end{align}
	Therefore,
	\begin{align}
		\Pyr(n^{-1}\RGauss{n}{\sigma})
		&\longrightarrow\mathcal P_{\le2\sigma^2},
		\label{eq:R-pyramid}\\
\Pyr(n^{-1/2}\WGauss{n}{\sigma}) &\longrightarrow\mathcal P_{\le2\sigma}. \label{eq:W-pyramid}
	\end{align}
	For any choice of totally geodesic hyperplanes
	$H_n\subset\Hyp^n$ through $o$, let $u_{H_n}$ be signed distance from
	$H_n$, positive on one chosen side.  Then
	\begin{align}
		(n^{-1}u_{H_n})_*\mu^{\mathrm R}_{n,\sigma}
		&\overset{\mathrm d}{\longrightarrow}
		\frac12\delta_{-\sigma^2}+\frac12\delta_{\sigma^2},
		\label{eq:R-witness}\\
		(n^{-1/2}u_{H_n})_*\mu^{\mathrm W}_{n,\sigma}
		&\overset{\mathrm d}{\longrightarrow}
		\frac12\delta_{-\sigma}+\frac12\delta_{\sigma}.
		\label{eq:W-witness}
	\end{align}
	At the respective critical scales, every fixed finite sample also
	converges to a regular-simplex distance matrix with edge length
	$2\sigma^2$ or $2\sigma$.
\end{cor}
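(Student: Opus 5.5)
The plan is to reduce everything to \Cref{thm:main}, \Cref{prop:radial-coupling}, \Cref{prop:equilateral} and \Cref{thm:dirac-curvature-principle}, using the radial limit laws of \Cref{thm:gaussian-radial-laws} and the gap estimates of \Cref{thm:gaussian-zero-shells}. First I will establish the radius coincidences \eqref{eq:R-radius-coincidence} and \eqref{eq:W-radius-coincidence}. The modal and zero-curvature scale limits are exactly \eqref{eq:mode-scales} and \eqref{eq:zero-scales}. For the shell-curvature decay radius I use $K^\Sigma_{n,m}=\sinh^{-2}m$ from \Cref{prop:shell-geometry}, so that
\[
-\tfrac{s_n}{2}\log K^\Sigma_{n,m_n}=s_n\log\sinh m_n=s_nm_n-s_n\log 2+s_n\log(1-e^{-2m_n}).
\]
With $s_n=n^{-1}$ (respectively $n^{-1/2}$), the right-hand side converges to $\sigma^2$ (respectively $\sigma$). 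By \eqref{eq:curvature-matching}, the tangential expression equals $-\frac{s_n}{2}\log(nK^\Sigma_{n,m_n})$, and it differs from the previous one by $\frac{s_n}{2}\log n\to0$, since $s_n\log n\to0$ in both cases.

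Next I will prove the box estimates \eqref{eq:R-zero-box} and \eqref{eq:W-zero-box}. By \Cref{prop:radial-coupling} it suffices to show $s_n(\mathbf R_n-\zeta_n)\to0$ in probability. \Cref{thm:gaussian-radial-laws} gives tightness of $\mathbf R^{\mathrm R}_{n,\sigma}-\bar r^{\mathrm R}_{n,\sigma}$ and of $\mathbf R^{\mathrm W}_{n,\sigma}-\sigma\sqrt n$. The gaps $\bar r^{\mathrm R}-\zeta^{\mathrm R}$ and $\sigma\sqrt n-\zeta^{\mathrm W}$ are bounded: the first is exponentially small by \eqref{eq:R-zero-gap}, and the second satisfies $|\sigma\sqrt n-\sigma\sqrt{n-1}|\to0$ together with \eqref{eq:W-zero-gap}. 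Consequently $\mathbf R_n-\zeta_n$ is tight, and multiplying by $s_n\to0$ gives convergence to zero in probability. The same argument with $m_n$ in place of $\zeta_n$ verifies \eqref{eq:modal-dirac}.

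For the pyramid and witness statements, I apply \Cref{thm:main} with $\rho_n=m_n$; equivalently, I apply \Cref{thm:dirac-curvature-principle}. The effective radius $a_{n,m_n}=s_n\log\sinh m_n-\frac{s_n}{2}\log n$ converges to $\sigma^2>0$ (respectively $\sigma>0$) by the first step. Hence $a_+=a$, which yields \eqref{eq:R-pyramid}, \eqref{eq:W-pyramid}, \eqref{eq:R-witness} and \eqref{eq:W-witness}. Finally, \Cref{prop:equilateral} applies with $s_nm_n\to r=\sigma^2$ or $\sigma$ and gives the regular simplices with edges $2\sigma^2$ and $2\sigma$.

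The only step needing care is the boundedness of the wrapped gap $\sigma\sqrt n-\zeta^{\mathrm W}_{n,\sigma}$. Its exact size depends on the asymptotics $m^{\mathrm W}_{n,\sigma}=\sigma\sqrt{n-1}$ and on \eqref{eq:W-zero-gap}, whose right-hand side $2m^2e^{-2m}$ tends to zero, so this step is not a real obstacle. No critical-scale cancellation is involved, because the $\frac12 s_n\log n$ correction is negligible at both scales $n^{-1}$ and $n^{-1/2}$.
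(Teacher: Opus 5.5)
Your proposal is correct and takes essentially the same route as the paper. The curvature radii come from unpacking \Cref{thm:dirac-curvature-principle} via $K^\Sigma_{n,m}=\sinh^{-2}m$ and $\lambda^{\mathrm{tan}}_n(m_n)=(n-1)K^\Sigma_{n,m_n}$, and the box estimates come from tightness of the centered radii, the vanishing gaps, and \Cref{prop:radial-coupling}. The pyramid, witness, and simplex statements come from \Cref{thm:main} and \Cref{prop:equilateral}; centering at $m_n$ rather than $\zeta_n$ is immaterial, since both have the same scaled limit.
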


\begin{proof}
	The curvature-radius limits follow from
	\Cref{thm:dirac-curvature-principle,thm:gaussian-zero-shells}.  By
	\Cref{thm:gaussian-radial-laws,thm:gaussian-zero-shells},
	\[
		\frac{\mathbf R^{\mathrm R}_{n,\sigma}
		-\zeta^{\mathrm R}_{n,\sigma}}n
		\overset{\mathrm P}{\longrightarrow}0,
		\qquad
		\frac{\mathbf R^{\mathrm W}_{n,\sigma}
		-\zeta^{\mathrm W}_{n,\sigma}}{\sqrt n}
		\overset{\mathrm P}{\longrightarrow}0.
	\]
	\Cref{prop:radial-coupling} gives \eqref{eq:R-zero-box} and
	\eqref{eq:W-zero-box}.  Applying \Cref{thm:main} at scales $n^{-1}$ and
	$n^{-1/2}$ proves the pyramid and signed-distance limits.  The
	fixed-sample assertion follows from \Cref{prop:equilateral}.
	This completes the proof.
\end{proof}

\section{Critical limits and the phase transition property}
\label{sec:critical-limits}

For a Borel probability measure $\nu$ on a metric space and
$0<\kappa<1$, put
\[ \PartDiam(\nu;1-\kappa) \coloneqq \inf\{\diam B\mid B\text{ is Borel and }\nu(B)\ge1-\kappa\}. \]
This partial diameter is written $\operatorname{diam}(\nu;1-\kappa)$ in
\cite{shioya2016mmg}.  The observable diameter of an mm-space $Y$ is
\[ \ObsDiam(Y;-\kappa) \coloneqq \sup_f\PartDiam(f_*\mu_Y;1-\kappa), \]
where $f$ ranges over all $1$-Lipschitz maps from $Y$ to $\R$.  A sequence
$(Y_n)$ is a L\'evy family if
$\ObsDiam(Y_n;-\kappa)\to0$ for every $\kappa>0$.

$\ObsDiam(Y;-\kappa)\coloneqq0$ for $\kappa\ge1$.  For a pyramid
$\mathcal Q$, put
\[ \ObsDiam(\mathcal Q;-\kappa) \coloneqq \sup_{Y\in\mathcal Q}\ObsDiam(Y;-\kappa). \]
The published limit formula \cite[Theorem~1.1]{yokota2024obsdiam} states
that if pyramids $\mathcal Q_n$ converge weakly to $\mathcal Q$ and
$\kappa>0$, then
\[
\begin{aligned}
	\ObsDiam(\mathcal Q;-\kappa)
	&=\lim_{\eps\to0+}\liminf_{n\to+\infty}\ObsDiam(\mathcal Q_n;-(\kappa+\eps))\\
	&=\lim_{\eps\to0+}\limsup_{n\to+\infty}\ObsDiam(\mathcal Q_n;-(\kappa+\eps)).
\end{aligned}
\]
Observable diameter is nonincreasing in the mass parameter, so the inner
limit superior is nonincreasing in $\eps$ and the outer limit is its
supremum.  Therefore, for every $\kappa,\eps>0$,
\[ \limsup_{n\to+\infty}\ObsDiam(\mathcal Q_n;-(\kappa+\eps)) \le\ObsDiam(\mathcal Q;-\kappa). \]
This inequality supplies the uniform observable-diameter bound in the
subcritical regime.

We use the separation distance introduced above.  The sequence $(Y_n)$ infinitely dissipates if
\[ \Sep(Y_n;\kappa_0,\ldots,\kappa_N)\longrightarrow+\infty \]
for every $N\ge1$ and every positive tuple with
$\sum_{i=0}^N\kappa_i<1$.

A sequence of mm-spaces $(Y_n)$ has the \emph{phase transition property} with
critical scale $t_n>0$ if the following two conditions hold for every positive
sequence $(\alpha_n)$.  If $\alpha_n/t_n\to0$, then $(\alpha_nY_n)$ is a
L\'evy family.  If $\alpha_n/t_n\to+\infty$, then
$(\alpha_nY_n)$ infinitely dissipates.

\begin{thm}[Criterion for the phase transition property]
\label{thm:critical-pyramid-ptp}
	Let $(t_n)$ be positive and suppose that
	\[
		\Pyr(t_nY_n)\longrightarrow\mathcal Q
	\]
	weakly.  Assume that
\[ \ObsDiam(\mathcal Q;-\kappa)<+\infty \qquad(\kappa>0), \]
	and that, for every $N\ge1$ and every positive tuple
	$\kappa_0,\ldots,\kappa_N$ satisfying $\sum_i\kappa_i<1$, there are
	$Y\in\mathcal Q$ and $\eps>0$ such that
	\[
		\Sep(Y;\kappa_0+\eps,\ldots,\kappa_N+\eps)>0.
	\]
	Then $(Y_n)$ has the phase transition property with critical scale
	$t_n$.
\end{thm}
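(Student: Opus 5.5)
The plan is to treat the two regimes separately. Each is reduced to a statement about the limit pyramid $\mathcal Q$, and the ratio $\alpha_n/t_n$ is absorbed by a simple scaling argument.

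\emph{Subcritical regime.} Suppose $\alpha_n/t_n\to0$ and fix $\kappa>0$. Put $c_n\coloneqq\alpha_n/t_n$. Observable diameter is homogeneous under metric rescaling, so
\[
	\ObsDiam(\alpha_nY_n;-\kappa)=c_n\ObsDiam(t_nY_n;-\kappa)\le c_n\ObsDiam(\Pyr(t_nY_n);-\kappa).
\]
I apply the consequence of \cite[Theorem~1.1]{yokota2024obsdiam} recorded above, with $\kappa/2$ in place of $\kappa$ and $\eps=\kappa/2$. This gives
\[
	\limsup_n\ObsDiam(\Pyr(t_nY_n);-\kappa)\le\ObsDiam(\mathcal Q;-\kappa/2)<+\infty.
\]
Hence the right-hand side is eventually bounded, and multiplying by $c_n\to0$ shows that $(\alpha_nY_n)$ is a L\'evy family. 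This step is routine. The only point to check is that $\ObsDiam(\Pyr(X);-\kappa)=\ObsDiam(X;-\kappa)$, which holds because observable diameter is monotone under domination.

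\emph{Supercritical regime.} Suppose $\alpha_n/t_n\to+\infty$ and fix a tuple with $\sum_i\kappa_i<1$. Choose $Y\in\mathcal Q$ and $\eps>0$ as in the hypothesis, and set $s\coloneqq\Sep(Y;\kappa_0+\eps,\ldots,\kappa_N+\eps)>0$. I want to show that
\[
	\liminf_n\Sep(t_nY_n;\kappa_0,\ldots,\kappa_N)\ge s'>0
\]
for some $s'$. Once this holds, homogeneity of $\Sep$ under rescaling gives
\[
	\Sep(\alpha_nY_n;\kappa_0,\ldots,\kappa_N)=(\alpha_n/t_n)\Sep(t_nY_n;\kappa_0,\ldots,\kappa_N)\to+\infty.
\]
To obtain the lower bound, I first pick Borel sets $B_i\subset Y$ with $\mu_Y(B_i)\ge\kappa_i+\eps$ and pairwise distances greater than $s/2$. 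Inner regularity lets me shrink them to compact sets $K_i$ with $\mu_Y(K_i)>\kappa_i+\eps/2$. Weak convergence of pyramids (the lower-limit half of the Painlev\'e--Kuratowski criterion) provides $Z_n\prec t_nY_n$ with $\Box(Z_n,Y)\to0$. I then repeat verbatim the parameter/Lusin transfer from the proof of \Cref{thm:deterministic-shell-input}, now for $N+1$ compact sets instead of two. This produces compact $K_{i,n}\subset Z_n$ with $\mu_{Z_n}(K_{i,n})\ge\mu_Y(K_i)-3/n\ge\kappa_i$ for large $n$, and with mutual distances at least $s/2-2/n$. Thus $\Sep(Z_n;\kappa_0,\ldots,\kappa_N)\ge s/4$ eventually. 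By monotonicity of separation under domination \cite[Lemma~2.25]{shioya2016mmg}, the same lower bound holds for $t_nY_n$.

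\emph{Main obstacle.} I expect the main obstacle to be this transfer of separation from the limit element $Y$ to spaces dominated by $t_nY_n$. The margin $\eps$ in the hypothesis is used exactly here, to absorb the $O(1/n)$ mass loss coming from the box distance and Lusin's theorem. One also needs enough care with the parameters that the sets obtained in $Z_n$ are Borel (compact, via Lusin). Since this argument already appears in \Cref{thm:deterministic-shell-input}, I would cite it and indicate only the modifications needed for several sets and for the mass thresholds $\kappa_i$.
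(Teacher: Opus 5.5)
Your proposal is correct and has the same structure as the paper's proof. The subcritical case combines the observable-diameter limit formula with homogeneity; your choice $\eps=\kappa/2$ is slightly more direct than the paper's bound through $-3\kappa/4$. The supercritical case transfers separated compact sets $K_i\subset Y$ to some $Z_n\prec t_nY_n$ with $\Box(Z_n,Y)\to0$, and then uses monotonicity and homogeneity of $\Sep$.

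The only real difference is how the sets are transferred. The paper projects $R_n\cap(K_i\times W_n)$, for a near-optimal closed coupling relation $R_n$, onto $W_n$; the projection is closed because $K_i$ is compact. You instead reuse the parameter/Lusin argument, which also works. However, you have not passed to a subsequence with $\Box(Z_n,Y)<1/n$, so you should replace the errors $1/n$ by $\eta_n\coloneqq\Box(Z_n,Y)+1/n\to0$.
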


\begin{proof}
	By monotonicity of observable diameter under domination
	\cite[Proposition~2.18(3)]{shioya2016mmg}, the observable
	diameter of $\Pyr(t_nY_n)$ equals that of $t_nY_n$.  For every $0<\kappa<1$,
	monotonicity in $\kappa$ and the published limit formula
	\cite[Theorem~1.1]{yokota2024obsdiam}, applied with parameter $\kappa/2$,
	give
	\[
	\begin{aligned}
\limsup_{n\to\infty}\ObsDiam(t_nY_n;-\kappa) &\le\limsup_{n\to\infty}\ObsDiam(t_nY_n;-3\kappa/4)\\
		&\le\ObsDiam(\mathcal Q;-\kappa/2)<+\infty.
	\end{aligned}
	\]
	If $\alpha_n/t_n\to0$, homogeneity of observable diameter
	\cite[Proposition~2.19]{shioya2016mmg} gives
\[ \ObsDiam(\alpha_nY_n;-\kappa) =\frac{\alpha_n}{t_n}\ObsDiam(t_nY_n;-\kappa)\longrightarrow0. \]
	For $\kappa\ge1$, the observable diameter is zero by convention.  Thus
	$(\alpha_nY_n)$ is a L\'evy family.

	Fix $N\ge1$ and a positive tuple $\kappa_0,\ldots,\kappa_N$ satisfying
	$\sum_i\kappa_i<1$.  Choose $Y\in\mathcal Q$ and $\eps>0$ from the
	hypothesis, and take
	\[
		0<\delta<\Sep(Y;\kappa_0+\eps,\ldots,\kappa_N+\eps).
	\]
	The definition of separation distance and inner regularity of the
	probability measure give compact sets $K_0,\ldots,K_N\subset Y$ such that
\[ \mu_Y(K_i)>\kappa_i+\frac{\eps}{2}, \qquad d_Y(K_i,K_j)>\delta\quad(i\ne j). \]

	Since weak convergence of pyramids means sequential
	Painlev\'e--Kuratowski convergence, there are $W_n\in\Pyr(t_nY_n)$ such that
	$\Box(W_n,Y)\to0$.  Choose $\eta_n\to0$, couplings
	$\pi_n\in\mathcal T(\mu_Y,\mu_{W_n})$, and closed relations
	$R_n\subset Y\times W_n$ such that
	\[
		1-\pi_n(R_n)\le\eta_n,\qquad\operatorname{dis}R_n\le\eta_n.
	\]
	For each $i$, put
\[ B_{i,n}\coloneqq \{w\in W_n\mid (y,w)\in R_n\text{ for some }y\in K_i\}. \]
	The set $R_n\cap(K_i\times W_n)$ is closed in $K_i\times W_n$.  Since
	$K_i$ is compact, its projection $B_{i,n}$ onto $W_n$ is closed.  For
	all sufficiently large $n$,
\[ \mu_{W_n}(B_{i,n}) \ge\pi_n\bigl(R_n\cap(K_i\times W_n)\bigr) \ge\mu_Y(K_i)-\eta_n>\kappa_i. \]
	If $i\ne j$, then
\[ d_{W_n}(B_{i,n},B_{j,n}) \ge d_Y(K_i,K_j)-\operatorname{dis}R_n>\delta-\eta_n. \]
	Therefore,
	\[
		\liminf_{n\to\infty}\Sep(W_n;\kappa_0,\ldots,\kappa_N)\ge\delta.
	\]
	Since $W_n\prec t_nY_n$, monotonicity of separation under domination
	\cite[Lemma~2.25]{shioya2016mmg} gives the same lower bound for $t_nY_n$.
	If $\alpha_n/t_n\to+\infty$, homogeneity of separation
	\cite[Proposition~2.9]{ozawa2015limit} gives
	\[
		\Sep(\alpha_nY_n;\kappa_0,\ldots,\kappa_N)
		=\frac{\alpha_n}{t_n}\Sep(t_nY_n;\kappa_0,\ldots,\kappa_N)
		\longrightarrow+\infty.
	\]
	Thus $(\alpha_nY_n)$ infinitely dissipates.  These are precisely the two
	conditions in the definition of the phase transition property.  This
	completes the proof.
\end{proof}

\begin{lem}[Admissibility of diameter-bounded pyramids]
\label{lem:diameter-pyramid-admissible}
	For every $D>0$,
\[ \ObsDiam(\mathcal P_{\le D};-\kappa)<+\infty \qquad(\kappa>0). \]
	Moreover, for every $N\ge1$ and every positive tuple
	$\kappa_0,\ldots,\kappa_N$ satisfying $\sum_i\kappa_i<1$, there are
	$Y\in\mathcal P_{\le D}$ and $\eps>0$ such that
	\[
		\Sep(Y;\kappa_0+\eps,\ldots,\kappa_N+\eps)>0.
	\]
\end{lem}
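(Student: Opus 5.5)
The lemma has two parts, and I would prove each directly: the first from the diameter bound, the second by building one explicit finite space.

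For the first assertion, fix $\kappa>0$ and take any $Y\in\mathcal P_{\le D}$ and any $1$-Lipschitz $f\colon Y\to\R$. The image $f(Y)$ has diameter at most $\diam Y\le D$. Hence $f_*\mu_Y$ is carried by a Borel set of diameter at most $D$, for instance the closed interval spanned by $f(\supp\mu_Y)$, which has full measure. This gives $\PartDiam(f_*\mu_Y;1-\kappa)\le D$. Taking suprema over $f$ and then over $Y$ yields $\ObsDiam(\mathcal P_{\le D};-\kappa)\le D<+\infty$ for $0<\kappa<1$; for $\kappa\ge1$ the value is $0$ by convention.

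For the second assertion, fix $N\ge1$ and a positive tuple $\kappa_0,\ldots,\kappa_N$ with $\sum_i\kappa_i<1$. Put
\[
\eps\coloneqq\frac{1-\sum_i\kappa_i}{2(N+1)}>0,
\]
so that $\sum_i(\kappa_i+\eps)<1$. Let $Y$ be the finite set $\{y_0,\ldots,y_N\}$ with $d_Y(y_i,y_j)=D$ for $i\ne j$. Give it the measure $\mu_Y(\{y_i\})\coloneqq\kappa_i+\eps+\tau$, where
\[
\tau\coloneqq\frac{1-\sum_i(\kappa_i+\eps)}{N+1}>0.
\]
Then $Y$ is a complete separable metric space with a Borel probability measure, and $\diam Y=D$, so $Y\in\mathcal P_{\le D}$; here one uses that $\mathcal P_{\le D}$ consists of all mm-spaces of diameter at most $D$. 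Taking the sets $B_i\coloneqq\{y_i\}$, each has mass at least $\kappa_i+\eps$ and every pairwise distance is $D$. Hence
\[
\Sep(Y;\kappa_0+\eps,\ldots,\kappa_N+\eps)\ge D>0.
\]

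There is no real obstacle. The only points needing care are that every atom has strictly positive mass, so that $\mu_Y$ has full support if the mm-space convention requires it, and that the masses are normalized to sum to one. Both are ensured by adding the positive slack $\tau$ to each atom.
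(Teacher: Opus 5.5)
Your proof is correct and matches the paper's argument: the diameter bound gives $\ObsDiam(\mathcal P_{\le D};-\kappa)\le D$, and an equidistant finite space whose atoms have masses at least $\kappa_i+\eps$ has separation distance $D$. The only cosmetic difference is that you spread the leftover mass evenly over the $N+1$ atoms, whereas the paper puts it on one additional point.
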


\begin{proof}
	Every $Y\in\mathcal P_{\le D}$ has diameter at most $D$.  Therefore,
	\[
		\ObsDiam(\mathcal P_{\le D};-\kappa)\le D.
	\]

	Fix $N\ge1$ and a positive tuple
	$\kappa_0,\ldots,\kappa_N$ with sum less than one.  Choose $\eps>0$ so
	that
	\[
		\sum_{i=0}^N(\kappa_i+\eps)<1.
	\]
	On $\{0,\ldots,N,N+1\}$, take the metric for which all distinct points
	have distance $D$.  Give point $i$ mass $\kappa_i+\eps$ for
	$0\le i\le N$, and give the last point the remaining mass.  This
	mm-space belongs to $\mathcal P_{\le D}$, and the first $N+1$
	singletons show that
	\[
		\Sep(Y;\kappa_0+\eps,\ldots,\kappa_N+\eps)=D.
	\]
	This completes the proof.
\end{proof}

\begin{cor}[Phase transition property for the two Gaussians]
\label{thm:gaussian-ptp}
	The sequences $(\RGauss{n}{\sigma})$ and
	$(\WGauss{n}{\sigma})$ have the phase transition property with
	critical scales $n^{-1}$ and $n^{-1/2}$, respectively.
\end{cor}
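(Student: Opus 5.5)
The corollary follows by feeding the critical pyramid limits of \Cref{cor:gaussian-coincidence} into the criterion of \Cref{thm:critical-pyramid-ptp}. For the Riemannian Gaussian, take $Y_n\coloneqq\RGauss{n}{\sigma}$ and $t_n\coloneqq n^{-1}$. Then \eqref{eq:R-pyramid} gives
\[
	\Pyr(t_nY_n)\longrightarrow\mathcal Q\coloneqq\mathcal P_{\le2\sigma^2}
\]
weakly. For the wrapped Gaussian, take $Y_n\coloneqq\WGauss{n}{\sigma}$ and $t_n\coloneqq n^{-1/2}$. Then \eqref{eq:W-pyramid} gives $\Pyr(t_nY_n)\to\mathcal P_{\le2\sigma}$ weakly.

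In both cases the limit pyramid is $\mathcal P_{\le D}$ with $D>0$: either $D=2\sigma^2$ or $D=2\sigma$, and both are positive because $\sigma>0$. This positivity matters, since a degenerate limit $\mathcal P_{\le0}$ would fail the separation hypothesis. \Cref{lem:diameter-pyramid-admissible} then supplies both hypotheses of \Cref{thm:critical-pyramid-ptp}:
\begin{itemize}
	\item finiteness of $\ObsDiam(\mathcal P_{\le D};-\kappa)$ for every $\kappa>0$;
	\item for every admissible tuple $\kappa_0,\ldots,\kappa_N$, an element $Y\in\mathcal P_{\le D}$ and $\eps>0$ with $\Sep(Y;\kappa_0+\eps,\ldots,\kappa_N+\eps)>0$.
\end{itemize}
Applying \Cref{thm:critical-pyramid-ptp} separately to the two families gives the phase transition property with critical scales $n^{-1}$ and $n^{-1/2}$, respectively.

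There is no real obstacle here; all the substantive work sits in the earlier results. The pyramid limits in \Cref{cor:gaussian-coincidence} rest on \Cref{thm:main} applied with $s_n=n^{-1}$ and $s_n=n^{-1/2}$. One only needs to confirm that the effective radius converges to a positive limit in each case, so that the limit is not the degenerate one-point pyramid.
\begin{itemize}
	\item Riemannian case: $s_n\log n\to0$ and $s_nm^{\mathrm R}_{n,\sigma}\to\sigma^2$, so $a=\sigma^2$.
	\item Wrapped case: $n^{-1/2}\log n\to0$ and $n^{-1/2}m^{\mathrm W}_{n,\sigma}\to\sigma$, so $a=\sigma$.
\end{itemize}
These are exactly the inputs of \Cref{cor:raw-dirac} with $\lambda=0$, and both limits are positive.
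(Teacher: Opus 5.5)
Your proposal is correct and matches the paper's proof: the critical limits $\mathcal P_{\le2\sigma^2}$ and $\mathcal P_{\le2\sigma}$ from \Cref{cor:gaussian-coincidence} satisfy the hypotheses of \Cref{thm:critical-pyramid-ptp} by \Cref{lem:diameter-pyramid-admissible}. Your explicit check that $D>0$, which rules out the one-point pyramid, is a useful remark that the paper leaves implicit.
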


\begin{proof}
	The limits in \eqref{eq:R-pyramid} and \eqref{eq:W-pyramid} are positive
	rescalings of diameter-bounded pyramids.  By
	\Cref{lem:diameter-pyramid-admissible}, the criterion in
	\Cref{thm:critical-pyramid-ptp} applies to both sequences.
	This completes the proof.
\end{proof}

For each fixed $\sigma>0$, both implications in the definition above have converses for the two families $(\RGauss{n}{\sigma})$ and $(\WGauss{n}{\sigma})$ under every positive rescaling sequence. At a finite positive subsequential ratio to the respective critical scale, \Cref{cor:gaussian-coincidence,thm:main} give a finite-diameter pyramid limit, which excludes infinite dissipation, and a nondegenerate two-point signed-distance observable, which excludes the L\'evy property, while the endpoint ratios $0$ and $+\infty$ are covered by \Cref{thm:gaussian-ptp}.

\begin{rem}[What becomes flat and what vanishes exactly]
\label{rem:what-is-flat}
	The sectional curvature of the ambient hyperbolic space remains $-1$.
	The intrinsic sectional curvature of a modal shell is positive, and the
	tangential Bakry--\'Emery Ricci eigenvalue is $(n-1)$ times that
	curvature.  The radii $\zeta^{\mathrm R}_{n,\sigma}$ and
	$\zeta^{\mathrm W}_{n,\sigma}$ determine different shells on which the
	tangential Bakry--\'Emery Ricci tensor vanishes exactly.  These shells
	approach the corresponding modal shells at exponential rates.

	In the Poincar\'e ball, the Euclidean radii of the exact zero-curvature
	shells approach the boundary according to
	\[
		\frac{1-\tanh(\zeta^{\mathrm R}_{n,\sigma}/2)}
		{2e^{-\bar r^{\mathrm R}_{n,\sigma}}}
		\longrightarrow1
	\]
	and
	\[
		\frac{1-\tanh(\zeta^{\mathrm W}_{n,\sigma}/2)}
		{2e^{-m^{\mathrm W}_{n,\sigma}}}
		\longrightarrow1.
	\]
	On either exact zero-curvature shell, the radial weighted-Ricci
	eigenvalue equals the full trace, and division by $n$ gives a limit of
	$-1$.  Exact tangential vanishing therefore does not imply vanishing of
	the ambient curvature or of the full weighted-Ricci tensor.
\end{rem}

\section*{Acknowledgments}

The author would like to thank Professor Takashi Shioya for many helpful
suggestions and guidance.  The author used Claude, GPT-5.5, and GPT-5.6-series
Codex models as AI-assisted tools in preparing this manuscript.  The author
reviewed and revised the mathematical content and takes full responsibility
for the final text.

\end{document}